\theoremstyle{plain}
\newtheorem{conjecture}{Conjecture} 
\newtheorem{defn}{Definition}[section]
\newtheorem{question}[conjecture]{Question}
\newtheorem{thm}[defn]{Theorem}
\newtheorem{cor}[defn]{Corollary}
\newtheorem{construction}[defn]{Construction}
\newtheorem{prop}[defn]{Proposition}
\newtheorem{notation}[defn]{Notation}
\newtheorem{remark}[defn]{Remark}
\newtheorem{lm}[defn]{Lemma}
\newtheorem{fact}[defn]{Fact}
\newtheorem{ingredient}{Ingredient}
\newtheorem*{sketch}{The structure of Section \ref{section 5}}
\newtheorem*{process}{Theorem \ref{process}}
\newtheorem*{extendable}{Theorem \ref{extendable}}
\newtheorem*{additional assumption}{Additional assumption}
\newtheorem*{sketch of the proof}{The sketch of the proof of Proposition \ref{resulting}}
\newtheorem*{postscript}{Postscript}
\begin{document}
	
\title[Left orderability and foliations for
$3$-manifolds with sphere boundary]{Left orderability, 
	foliations and
	transverse $(\pi_1,\mathbb{R})$ structures for
	$3$-manifolds with sphere boundary}
\author{Bojun Zhao}
\address{Department of Mathematics, University at Buffalo, Buffalo, NY 14260, USA}
\email{bojunzha@buffalo.edu}
\maketitle

\begin{abstract}
	Let $M$ be a closed orientable irreducible $3$-manifold
	such that $\pi_1(M)$ is left orderable.
	
	(a)
	Let $M_0 = M - Int(B^{3})$,
	where $B^{3}$ is a compact $3$-ball in $M$.
	We have a process to produce 
	a co-orientable Reebless foliation $\mathcal{F}$ in $M_0$
    such that:
    (1)
    $\mathcal{F}$ has a transverse $(\pi_1(M),\mathbb{R})$ structure,
    (2)
    there exists 
    a simple closed curve in $M$ that
    is co-orientably transverse to $\mathcal{F}$ and
    intersects every leaf of $\mathcal{F}$.
    More specifically,
    given a pair $(<,\Gamma)$ composed of a left-invariant order ``$<$'' of $\pi_1(M)$ 
    and a fundamental domain $\Gamma$ of $M$ in its universal cover with certain property
    (which always exists),
    we can produce a resulting foliation in $M - Int(B^{3})$ as above,
    and we can test if it can extend to a taut foliation of $M$.
    
    (b)
    Suppose further that $M$ is either atoroidal or a rational homology $3$-sphere.
    If $M$ admits an $\mathbb{R}$-covered foliation $\mathcal{F}_0$,
    then there is a resulting foliation $\mathcal{F}$ of our process in $M - Int(B^{3})$ such that:
    $\mathcal{F}$ can extend to 
    an $\mathbb{R}$-covered foliation $\mathcal{F}_{extend}$ of $M$,
    and
    $\mathcal{F}_0$ can be recovered from doing 
    a collapsing operation on
    $\mathcal{F}_{extend}$.
    Here,
    by a collapsing operation on $\mathcal{F}_{extend}$,
    we mean the following process:
    (1) choosing an embedded product space
    $S \times I$ in $M$ for
    some (possibly non-compact) surface $S$ such that
    $S \times \{0\},
    S \times \{1\}$ are leaves of $\mathcal{F}_{extend}$
    (notice that $\mathcal{F}_{extend} \mid_{S \times I}$ may not be 
    a product bundle),
    (2) replacing $\mathcal{F}_{extend} \mid_{S \times I}$ by a single leaf $S$.
    
    (c)
    We conjecture that there always exists
    a resulting foliation of our process in $M - Int(B^{3})$ which
    can extend to a taut foliation in $M$.
\end{abstract}

\section{Introduction}

Throughout this paper,
all $3$-manifolds are assumed to be orientable,
and all foliations and laminations
are assumed to be co-orientable.

The L-space conjecture was proposed by Boyer-Gordon-Watson 
in \hyperref[BGW]{[BGW]} and 
by Juh\'asz in \hyperref[J]{[J]}:

\begin{conjecture}[L-space conjecture]\label{L-space conjecture}
	Let $M$ be an orientable irreducible rational homology $3$-sphere.
	Then the following statements are equivalent:
	
	(1)
	$M$ is a non-L-space.
	
	(2)
	$\pi_1(M)$ is left orderable.
	
	(3)
	$M$ admits a co-orientable taut foliation.
\end{conjecture}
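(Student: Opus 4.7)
The three equivalent statements of the L-space conjecture involve directions which are, at present, open. The implication (3) $\Rightarrow$ (1) is due to Ozsv\'ath-Szab\'o together with work of Bowden and Kazez-Roberts, and (3) $\Rightarrow$ (2) follows from universal-circle constructions of Boyer-Rolfsen-Wiest and Calegari-Dunfield. The substantive open direction on which to focus a proof attempt, and the one to which the present paper's construction directly speaks, is (2) $\Rightarrow$ (3): manufacture a co-orientable taut foliation on $M$ from left-orderability of $\pi_1(M)$.

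My plan is to apply the main theorem of the present paper to produce, on $M_0 = M - \mathrm{Int}(B^3)$, a Reebless co-orientable foliation $\mathcal{F}$ with a transverse $(\pi_1(M_0),\mathbb{R})$-structure and a compact transverse $1$-manifold meeting every leaf. As the paper itself conjectures, the goal is then to show $\mathcal{F}$ is obtained from a taut foliation on $M$ by removing a $3$-ball foliated by horizontal disks; equivalently, to extend $\mathcal{F}$ across the deleted ball by a horizontal-disk foliation.

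The technical heart would be a normal-form theorem near $\partial M_0 = S^2$. Working in the universal cover, the developing map of the transverse $(\pi_1(M_0),\mathbb{R})$-structure is a $\pi_1$-equivariant real-valued function whose level sets lift the leaves of $\mathcal{F}$. Because $\partial M_0$ is simply connected, its preimage in $\widetilde{M_0}$ is a disjoint union of $2$-spheres; on each such sphere I would try to show that, after a collar isotopy of $\mathcal{F}$, the developing map restricts to a standard height function with a single interior critical pair, so that nearby leaves meet the collar in round disks parallel to $\partial M_0$. Downstairs this eliminates any annular Reeb-type pockets that could arise from leaves tangent to $\partial M_0$, and displays a collar of $\partial M_0$ as a product foliated by disks, ready to be glued to the horizontally foliated $B^3$.

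The main obstacle is exactly this boundary normal form. The theorem as stated allows leaves tangent to $\partial M_0$, so the argument must rule out — or collapse by isotopy — pathological accumulation patterns on the sphere boundary, such as infinite families of tangent leaves with non-trivial holonomy around $\partial M_0$. This is where the stronger hypothesis that $\pi_1(M)$, and not merely $\pi_1(M_0)$, is left-orderable would have to be leveraged in a way the $M_0$-construction does not yet use; the extra left-invariant order on $\pi_1(M)$, compared to the induced order on $\pi_1(M_0)$, is essentially the only new data available at the level of fundamental groups, so any proof must extract the boundary normal form from it. Once the collar is standardized, the remaining extension is immediate: glue in a $3$-ball foliated by horizontal disks, verify that the resulting foliation on $M$ is Reebless, and extend the transverse $1$-manifold by a radial arc through the ball to obtain a transversal meeting every leaf, which can be closed up to a closed transversal, certifying tautness.
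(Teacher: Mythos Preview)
The statement you are attempting to prove is the L-space conjecture itself, which the paper states as an \emph{open conjecture} and does not prove. There is no ``paper's own proof'' to compare against; your proposal is an attempt at an open problem, and it contains genuine gaps.

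First, two factual errors. You assert that (3) $\Rightarrow$ (2) is known via universal-circle constructions; it is not. Universal circles give faithful $\pi_1$-actions on $S^1$, not on $\mathbb{R}$, and (3) $\Rightarrow$ (2) remains open in general. Second, your remark that one must leverage ``the stronger hypothesis that $\pi_1(M)$, and not merely $\pi_1(M_0)$, is left-orderable'' is confused: removing an open $3$-ball from a $3$-manifold does not change the fundamental group, so $\pi_1(M)\cong\pi_1(M_0)$ and there is no additional order-theoretic data to exploit.

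More substantively, your proposed route to (2) $\Rightarrow$ (3) is precisely the paper's own Conjecture~\ref{conjecture}, which the paper explicitly leaves open. The paper identifies the actual obstruction in Remark~\ref{extend}: extending $\mathcal{F}$ to a taut foliation of $M$ is equivalent to the vertical boundary $\partial_v N(B)$ being a single annulus, which in turn is equivalent to the positive and negative faces of the fundamental domain $\Gamma$ each forming a disk. This is a combinatorial condition on the chosen fundamental domain and left order, not something that can be repaired by a ``collar isotopy'' or by analyzing a developing map on $\partial M_0$. Your proposed boundary normal form---arguing that the developing map restricts to a standard height function on each lifted $2$-sphere---does not address this: when $\partial_v N(B)$ has several components, $\mathcal{F}\mid_{\partial M_0}$ is a singular foliation of $S^2$ with more than two singular points, and no isotopy supported near $\partial M_0$ will convert it into a disk foliation of a collar. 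The paper shows only that the extension goes through under an additional Novikov-type hypothesis on $\mathcal{F}$, and gives examples (surfaces of genus $\geq 2$, Example~\ref{surface example}(b)) where the analogous lower-dimensional construction provably cannot be filled in.
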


Conjecture \ref{L-space conjecture} has been proved for
graph manifolds in
\hyperref[BC]{[BC]}, \hyperref[Ra]{[Ra]}, \hyperref[HRRW]{[HRRW]}.
The implication (3) $\Rightarrow$ (1)
is known by
\hyperref[OS]{[OS]},
\hyperref[B]{[B]},
\hyperref[KR]{[KR]}.
In \hyperref[Ga1]{[Ga1]},
Gabai proved that
all closed orientable irreducible $3$-manifolds with 
positive first Betti number
admit taut foliations.
In \hyperref[BRW]{[BRW]},
Boyer, Rolfsen and Wiest proved that
for every compact orientable irreducible $3$-manifold $M$ with $b_1(M) > 0$,
$\pi_1(M)$ is left orderable.
Hence (3) and (2)
hold for all closed orientable irreducible $3$-manifolds 
with $b_1 > 0$.

Furthermore,
connections between 
foliations and laminations,
and orders and group actions (on $1$-manifolds)
were developed in many works.
Through his universal circle construction
(\hyperref[T]{[T]}),
Thurston showed that:
for every atoroidal $3$-manifold $M$ that admits a taut foliation,
there exists an effective action of $\pi_1(M)$ on $S^{1}$
induced by the taut foliation.
In \hyperref[CD]{[CD]},
Calegari and Dunfield showed that
tight essential laminations
with solid tori guts in every atoroidal $3$-manifold $M$ 
also induce
effective actions of $\pi_1(M)$ on $S^{1}$.

Transverse $(\pi_1,\mathbb{R})$ structure for taut foliations is
a way to relate them to
left orderability of the fundamental group.

\begin{defn}\rm\label{transverse structure}
Suppose that $M$ is a closed orientable irreducible $3$-manifold
which admits a taut foliation $\mathcal{F}$.
Let $p: \widetilde{M} \to M$ be the universal covering of $M$,
and let $L$ be the leaf space of the pull-back foliation of $\mathcal{F}$ in
$\widetilde{M}$.
There is a natural action of $\pi_1(M)$ on $L$ induced by the deck transformations of $\widetilde{M}$,
called the \emph{$\pi_1$-action} on $L$.
A \emph{transverse $(\pi_1(M),\mathbb{R})$ structure} on $\mathcal{F}$ is
an immersion $i_{des}: L \to \mathbb{R}$ that
descends the $\pi_1$-action on $L$ to
a nontrivial action of $\pi_1(M)$ on $\mathbb{R}$
via homeomorphisms,
i.e. $i_{des}$ induces a homomorphism
$d: \pi_1(M) \to Homeo_+(\mathbb{R})$ such that the following diagram commutes (for every $g \in \pi_1(M)$):
\begin{center}	
	\begin{tikzcd}
		L \arrow[r, "i_{des}"] \arrow[d, "g"']
		& \mathbb{R} \arrow[d, "d(g)"] \\
		L \arrow[r, "i_{des}"]
		& \mathbb{R}
	\end{tikzcd}
\end{center}
Here, by an immersion,
we mean a topological immersion
(i.e. a local homeomorphism).
Furthermore,
$\mathcal{F}$ is called \emph{$\mathbb{R}$-covered} if $L \cong \mathbb{R}$.
\end{defn}

Notice that if $\mathcal{F}$ has
a transverse $(\pi_1(M),\mathbb{R})$ structure,
then $\pi_1(M)$ acts on $\mathbb{R}$ nontrivially,
and therefore
$\pi_1(M)$ is left orderable
(cf. \hyperref[BRW]{[BRW, Theorem 3.2]}).
The following question was proposed by Thurston
(cf. \hyperref[Cal3]{[Cal3, Question 8.1]}):

\begin{question}[Thurston]\label{Thurston}
	Let $M$ be a closed orientable irreducible $3$-manifold 
	such that $\pi_1(M)$ is left orderable.
	When does $M$ admit a taut folation
	with
	a transverse $(\pi_1(M),\mathbb{R})$ structure?
\end{question}

See \hyperref[T]{[T]} for 
some motivations for Question \ref{Thurston} and
\hyperref[Cal3]{[Cal3, Question 8.1, Remark]} for
more information on Question \ref{Thurston}.
It's clear that every $\mathbb{R}$-covered foliation in a closed $3$-manifold has
a natural transverse $(\pi_1,\mathbb{R})$ structure.
In \hyperref[Z]{[Z]},
Zung showed that a large class of closed $3$-manifolds admit
(non-$\mathbb{R}$-covered) taut foliations with a similar property.

At first,
we consider the related topics for
compact $3$-manifolds with $S^{2}$ boundary.
Our first result provides
foliations in this setting which are analogous to taut foliations in
closed $3$-manifolds:

\begin{thm}\label{theorem 1}
	Let $M$ be a closed orientable irreducible $3$-manifold
	such that $\pi_1(M)$ is left orderable.
	Let $M_0 = M - Int(B^{3})$,
	where $B^{3}$ is a compact $3$-ball in $M$.
	Then there is a Reebless foliation $\mathcal{F}$ in $M_0$,
	where the leaves of $\mathcal{F}$ may be
	transverse to $\partial M_0$
	or tangent to $\partial M_0$
	at their intersections with
	$\partial M_0$,
	such that:
	
	(1)
	$\mathcal{F}$ is analogous to ``taut'' in the following sense:
	
	$\bullet$
	There is a simple closed curve in $M$ 
	that is co-orientably transverse to $\mathcal{F}$ and
	has nonempty intersection with every leaf of $\mathcal{F}$.
	
	(2)
	$\mathcal{F}$ has
	a transverse $(\pi_1(M),\mathbb{R})$ structure:
	
	$\bullet$
    Let $L$ denote
	the leaf space of the pull-back foliation of $\mathcal{F}$
	in the universal cover of $M_0$,
	then there exists an immersion $i_{des}: L \to \mathbb{R}$
	descending the $\pi_1$-action on $L$
	to an effective action on $\mathbb{R}$.
	
	(3)
	Every transverse intersection component of some leaf of $\mathcal{F}$ and 
	$\partial M_0$ is a circle.
	
	Furthermore,
	we have a process to produce such foliations.
	$\mathcal{F}$ is called a \emph{resulting foliation} of this process.
\end{thm}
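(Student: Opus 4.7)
The plan is to translate the left order on $\pi_1(M_0) \cong \pi_1(M)$ (the isomorphism holding since removing an open $3$-ball does not alter $\pi_1$) into a faithful dynamical action on $\mathbb{R}$, and then to realize this action as the transverse structure of the desired foliation via a $\pi_1$-equivariant real-valued function on the universal cover $\widetilde{M_0}$. The starting point is the standard dynamic-realization construction: any countable left orderable group embeds faithfully and orientation-preservingly into $\mathrm{Homeo}^+(\mathbb{R})$. Applied to $\pi_1(M_0)$, this yields a faithful homomorphism $\rho : \pi_1(M_0) \to \mathrm{Homeo}^+(\mathbb{R})$, which is intended to be the action induced on $\mathbb{R}$ by the transverse structure of $\mathcal{F}$ under the immersion $i$ of condition~(2).

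Next I would construct a continuous $\rho$-equivariant map $F : \widetilde{M_0} \to \mathbb{R}$. Fix an equivariant CW structure on $\widetilde{M_0}$ lifted from a CW decomposition of $M_0$; on the vertices lying in a chosen fundamental domain assign arbitrary real values, extend equivariantly to all vertices by $F(\gamma\cdot v) := \rho(\gamma) F(v)$, and then continuously extend cell by cell through dimensions $1$, $2$, and $3$. Equivariance is preserved at every stage because each $\rho(\gamma)$ is a homeomorphism of $\mathbb{R}$ and continuous extension commutes with post-composition by such a homeomorphism. The lifted foliation $\widetilde{\mathcal{F}}$ will be built so that each of its leaves is contained in a single level set of $F$.

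The main obstacle, and the bulk of the technical work, is to upgrade the raw map $F$ into a genuine foliation $\widetilde{\mathcal{F}}$. The generic level sets of $F$ coming from the cell-by-cell extension are singular $2$-complexes, and equivariant perturbations alone cannot in general remove all singularities, since the dynamics of $\rho$ (fixed points, parabolic points, etc.) obstructs global equivariant smoothing. The decisive input here is the removed ball: critical behavior of $F$ that cannot be eliminated equivariantly is to be pushed into a $\pi_1$-invariant neighborhood of the lifts of $\partial M_0$ and absorbed there by allowing each lifted boundary sphere to be either a tangent leaf of $\widetilde{\mathcal{F}}$, or to be foliated transversely with a controlled set of tangencies (as forced by $\chi(S^{2})=2$). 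Outside this neighborhood, $F$ is arranged to be a submersion, and its level sets assemble into a codimension-one $\pi_1$-invariant foliation with the prescribed transverse-or-tangent behavior along each boundary lift. I expect this step, with its delicate interaction between the dynamics of $\rho$, the topology of $\widetilde{M_0}$, and the boundary behavior, to be the principal technical difficulty.

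Granting the construction, the remaining properties follow by comparatively standard arguments. The descended foliation $\mathcal{F}$ on $M_0$ has leaf space $L$ (computed in $\widetilde{M_0}$) equipped with an induced $\rho$-equivariant map $i : L \to \mathbb{R}$, well-defined because each leaf lies in a single level set of $F$; $i$ is a local homeomorphism, hence an immersion, by the submersion property of $F$ outside the absorbed singular set, and the descended action is $\rho$, which is effective by the dynamic realization. Reeblessness is then a consequence of the existence of $i$: a Reeb component would produce a compact torus leaf whose two sides would be forced into the same point of $\mathbb{R}$, violating local injectivity of $i$ along a transversal crossing that torus. Finally, a compact $1$-manifold transverse to $\mathcal{F}$ meeting every leaf is produced from an equivariant gradient-like vector field for $F$ on $\widetilde{M_0}$: one of its flow lines surjects onto $\mathbb{R}$ and hence meets every leaf of $\widetilde{\mathcal{F}}$ upstairs, and its projection to $M_0$ can be truncated to a finite union of properly embedded arcs and/or simple closed curves still meeting every leaf of $\mathcal{F}$.
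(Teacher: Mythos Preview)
Your proposal correctly identifies the broad architecture --- an equivariant map $F:\widetilde{M_0}\to\mathbb{R}$ whose level sets should yield the foliation --- but it has a genuine gap at precisely the step you flag as the main difficulty, and that step is the entire content of the theorem. You write that critical behavior of $F$ ``is to be pushed into a $\pi_1$-invariant neighborhood of the lifts of $\partial M_0$ and absorbed there,'' but you give no mechanism for doing so. The map $F$ does not descend to an $\mathbb{R}$-valued function on $M_0$, so there is no ordinary Morse theory on the quotient to invoke; and on $\widetilde{M_0}$ you would need to cancel an infinite, $\pi_1$-equivariant family of critical points, where the dynamics of $\rho$ (fixed points, non-dense orbits, etc.) obstructs naive cancellation arguments. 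Without a concrete lemma here, the proposal is a restatement of the problem rather than a solution. Your downstream arguments (Reeblessness from the immersion $i$, the transversal from a gradient-like flow) are reasonable once the foliation exists, but they presuppose exactly what has not been built.

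The paper takes a completely different, and fully constructive, route that sidesteps the Morse-theoretic difficulty. It fixes a fundamental domain $\Gamma\subset\widetilde{M}$ whose image $p(\partial\Gamma)$ is a standard spine, and uses the left order on $G=\pi_1(M)$ to assign cusp directions to this spine, turning it into a branched surface $B$ with complement a single $3$-ball. The leaves are then produced combinatorially: to each Dedekind-cut-like ``order division'' $C\subsetneq G$ one associates the surface $l_C=\partial\bigl(\bigcup_{h\in C}t_h(\Gamma)\bigr)\subset\widetilde{B}$, and the collection $\{l_C\}$ is realized as an actual lamination $\Lambda$ fully carried by $B$ via an explicit gluing of horizontal pieces over the branch sectors, controlled by an order-preserving embedding $e:\mathcal{C}\to\mathbb{R}$. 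The foliation $\mathcal{F}$ of $M_0\cong N(B)$ is obtained by filling the interstitial $I$-bundles, and the immersion $i:L\to\mathbb{R}$ is read off directly from $e$. No critical-point cancellation is needed because the transverse direction is built in from the start via the $I$-fibers of $N(B)$; the single removed ball arises naturally as the unique gut region of $\Lambda$.
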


\begin{remark}\rm \label{remark}
	(a)
	There are many equivalent definitions for
	taut foliations in closed $3$-manifolds
	(cf. \hyperref[Cal4]{[Cal4, Chapter 4]}, \hyperref[Cal5]{[Cal5]}).
	The property of $\mathcal{F}$ in Theorem \ref{theorem 1} (1) is only analogous to one of these definitions
	for taut foliations:
	there exists a compact $1$-manifold 
	(i.e. a finite union of simple closed curves)
	transverse to the foliation that intersects every leaf.

	(b)
	Theorem \ref{theorem 1} (2) implies that
	$\pi_1(M)$ acts on $\mathbb{R}$ effectively.
	Hence the existence of the foliation $\mathcal{F}$
	as given in Theorem \ref{theorem 1}
	is a necessary and sufficient condition for
	$\pi_1(M)$ being left orderable.
	
	(c)
	$\mathcal{F}$ has the following property:
	if a simple closed curve in $M$ is co-orientably transverse to $\mathcal{F}$ and
	has nonempty intersection with some leaves of $\mathcal{F}$,
	then it is essential in $M$ (cf. Remark \ref{monotone}).
\end{remark}

It's clear that Theorem \ref{theorem 1} is a necessary condition for $M$ to
admit a taut foliation with transverse $(\pi_1(M), \mathbb{R})$ structure.
We suspect that it is also sufficient.
In the following,
we show that many resulting foliations of our process can extend to
$\mathbb{R}$-covered foliations of $M$.

\subsection{$\mathbb{R}$-covered foliations constructed in our process}
$\mathbb{R}$-covered foliations are studied extensively in atoroidal $3$-manifolds in many works,
which provide plentiful examples and 
various geometric and topological properties.
See \hyperref[Fe1]{[Fe1]}, \hyperref[T]{[T]}, \hyperref[Cal1]{[Cal1]},
\hyperref[Cal2]{[Cal2]}, \hyperref[Fe2]{[Fe2]} for example.
The following question is given in \hyperref[Cal3]{[Cal3, Question 8.3]}:

\begin{question}
	Given an atoroidal $3$-manifold that admits a taut foliation, must it admit $\mathbb{R}$-covered foliations?
\end{question}

The following result indicates that:
combined with an operation,
all $\mathbb{R}$-covered foliations in irreducible atoroidal $3$-manifolds and rational homology $3$-spheres
can be constructed by our process:

\begin{defn}\rm\label{collapsing operation}
	Let $\mathcal{F}_0$ be a co-orientable taut foliation of a closed $3$-manifold $M$.
	Suppose that there is an embedded product region $S \times I$ in $M$
	(where $S$ is a possibly non-compact surface) such that
	$S \times \{0\}, S \times \{1\}$ are leaves of $\mathcal{F}_0$.
	Notice that $\mathcal{F}_0 \mid_{S \times I}$ may not be a product bundle.
	We can replace the region $S \times I \subseteq M$ by a single surface $S$ and
	replace $\mathcal{F}_0 \mid_{S \times I}$ by the single leaf $S$.
	Then we obtain a foliation $\mathcal{F}^{'}_{0}$ of $M$ which
	is necessarily co-orientable and taut.
	We call the process $\mathcal{F}_0 \rightsquigarrow \mathcal{F}^{'}_{0}$ 
	a \emph{collapsing operation}.
\end{defn}

\begin{thm}\label{process}
	Let $M$ be either an orientable irreducible atoroidal $3$-manifold or
	an orientable rational homology $3$-sphere.
	Suppose that $M$ admits an $\mathbb{R}$-covered foliation $\mathcal{F}_0$.
	Then there is a resulting foliation $\mathcal{F}$ of $M - Int(B^{3})$ obtained from
	the process in Theorem \ref{theorem 1},
	such that:
	
	(1)
	$\mathcal{F}$ can extend to 
	an $\mathbb{R}$-covered foliation $\mathcal{F}_{extend}$ of $M$.
	
	(2)
	$\mathcal{F}_0$ can be recovered from doing a collapsing operation on $\mathcal{F}_{extend}$.
\end{thm}

\subsection{The test condition for extendability}\label{subsection 1.1}

We provide the input of our process,
and we provide
the condition to test if a resulting foliation of our process in $M - Int(B^{3})$ 
can extend to a taut foliation in $M$ through the input.

Let $M$ be a closed orientable irreducible $3$-manifold
such that $\pi_1(M)$ is left orderable.
Let $p: \widetilde{M} \to M$ be the universal covering of $M$.
We fix a base point $\widetilde{x} \in \widetilde{M}$.
Let $x = p(\widetilde{x})$,
and let $G = \pi_1(M,x)$.

\begin{defn}\rm\label{order-domain pair}
	A fundamental domain $\Gamma$ of $M$ in $\widetilde{M}$ is
	called \emph{standard} if 
	$p(\partial \Gamma)$ is a standard spine
	(cf. Definition \ref{standard spine}).
	An \emph{order-domain pair} $(<,\Gamma)$ of $M$ is 
	a pair of
	a left invariant order ``$<$'' of $G$
	and 
	a standard fundamental domain $\Gamma$ of $M$ in $\widetilde{M}$.
\end{defn}

In Lemma \ref{standard spine proof},
we show that $M$ always has 
a standard fundamental domain $\Gamma$ in $\widetilde{M}$,
and
$\Gamma$ can be constructed in finitely many steps.
To be convenient,
we assume $\widetilde{x} \in Int(\Gamma)$.
For each $h \in G$,
we denote by $t_h: \widetilde{M} \to \widetilde{M}$
the deck transformation such that:
if $\eta$ is a path in $\widetilde{M}$ that starts at $\widetilde{x}$
and ends at $t_h(\widetilde{x})$,
then $p(\eta)$ is a loop in $M$ with
$[p(\eta)] = h$.

A compact region $F \subseteq \partial \Gamma$ is called 
a \emph{face} of $\Gamma$ if
there is $h \in G - \{1\}$ such that
$F$ is a component of $\Gamma \cap t_h(\Gamma)$ and
$F \nsubseteq \Gamma \cap t_g(\Gamma)$ for arbitrary
$g \in G - \{1,h\}$.
And we call $F$ a \emph{positive face}
(resp. \emph{negative} face) if
$h > 1$ (resp. $h < 1$).

\begin{defn}\rm
	An order-domain pair $(<,\Gamma)$ of $M$ is called a \emph{good pair} if
	the union of positive faces of $\Gamma$ is 
	a single $2$-disk.
	And we call $(<,\Gamma)$ a \emph{very good pair} if
	(1)
	$(<,\Gamma)$ is good,
	(2)
	both $\bigcup_{g \in G, g < 1}t_g(\Gamma)$ and 
	$\bigcup_{g \in G, g \geqslant 1}t_g(\Gamma)$ are connected.
\end{defn}

The input of our process and the test condition for extendability are as follows:

\begin{thm}\label{extendable}
	Let $(<,\Gamma)$ be an arbitrary order-domain pair of $M$.
	
	(a)
	$(<,\Gamma)$ produces a resulting foliation through the process in Theorem \ref{theorem 1}
	(called a \emph{resulting foliation} of $(<,\Gamma)$),
	which is uniquely deteremined by $(<,\Gamma)$ up to blowing-up/down.
	
	(b)
	A resulting foliation of $(<,\Gamma)$ can extend to a taut foliation of $M$ if and only if
	$(<,\Gamma)$ is a good pair.
	Moreover,
	if it can extend to a taut foliation $\mathcal{F}_1$ of $M$,
	then $\mathcal{F}_1$ has a transverse $(\pi_1(M),\mathbb{R})$ structure.
	
	(c)
	A resulting foliation of $(<,\Gamma)$ can extend to an $\mathbb{R}$-covered foliation of $M$ if and only if
	$(<,\Gamma)$ is a very good pair.
\end{thm}

Combined with Theorem \ref{theorem 1} and Theorem \ref{process},
we have

\begin{cor}\label{summary}
    Suppose that $M$ is either an orientable irreducible atoroidal $3$-manifold or
    an orientable irreducible rational homology $3$-sphere.
    
    (a)
	$M$ admits an $\mathbb{R}$-covered foliation if and only if
	there is a very good pair $(<,\Gamma)$ of $M$.
	
	(b)
	We have a process with input all very good pairs $(<,\Gamma)$ of $M$ and
	output all $\mathbb{R}$-covered foliations of $M$,
	up to blowing-up/down, collapsing operation, and its inverse.
\end{cor}

Notice that the space of left-invariant orders of $\pi_1(M)$ is either finite or uncountable
(cf. \hyperref[Lin]{[Lin]}).

\begin{cor}
    Under the assumption of Corollary \ref{summary},
	if the space of left-invariant orders of $\pi_1(M)$ is finite,
	then $M$ admits countably many distinct $\mathbb{R}$-covered foliations,
	up to blowing-up/down, collapsing operation, and its inverse.
\end{cor}

Our result motivates the following conjecture:

\begin{conjecture}\label{conjecture}
	Fix an arbitrary left-invariant order ``$<$'' of $G$.
	There is a process to produce
	a standard fundamental domain $\Gamma$ of $M$ in $\widetilde{M}$
	such that
	$(<,\Gamma)$ is a good pair.
\end{conjecture}

\subsection{Organization}
In Section \ref{section 2},
we provide some basic settings of this paper and
review some preliminaries on branched surfaces.

For a $3$-manifold $M$ as in Theorem \ref{theorem 1},
we construct a branched surface $B$ in $M$ in Section \ref{section 3}.
In Subsections \ref{subsection 4.1}$\sim$\ref{subsection 4.4},
we use $B$ to
construct the foliation $\mathcal{F}$ as required in 
Theorem \ref{theorem 1}.
In Subsection \ref{subsection 4.6},
we prove Theorem \ref{extendable}.
In Subsection \ref{subsection 4.7},
we discuss the $2$-dimensional case.
We prove Theorem \ref{process} in Section \ref{section 5}.

\begin{postscript}\rm
After we posted the first version of our paper to arXiv,
Baik, Hensel and Wu provided a new method to prove
Theorem \ref{theorem 1} under 
the assumption that $M$ admits
a strongly essential $1$-vertex trangulation in \hyperref[BHW]{[BHW]}.
\end{postscript}

\subsection{Acknowledgements}
    The author wishes to thank Professor Xingru Zhang for much help and support
    from him during this work,
    and for many suggestions and discussions
    that benefit the author a lot.
    The auther is grateful to 
    Professor Tao Li for answering some questions helpfully.
    
\section{Preliminaries}\label{section 2}

\subsection{Conventions}\label{convention}

In this paper,
all actions on manifolds are assumed to be orientation-preserving.
All intervals in $\mathbb{R}$ are assumed to have
increasing orientation,
and all homeomorphisms between intervals in $\mathbb{R}$ are 
assumed to be orientation-preserving.
By an \emph{immersion} from
a (possibly non-Hausdorff) $1$-manifold to
$\mathbb{R}$,
we will always mean a topological immersion
(i.e. a local homeomorphism).
For a set $X$,
we denote by $|X|$ the cardinality of $X$.
For metric spaces $A$ and $B$,
we denote by $A \setminus \setminus B$
the closure of $A - B$ under the path metric.
And we regard two foliations in a $3$-manifold as the same one if they are same up to isotopy of the $3$-manifold.

For any compact oriented $3$-manifold $N$ with
nonempty boundary,
the \emph{positive orientation} of $\partial N$ with respect to
the orientation of $N$ is
assumed to be the orientation on $\partial N$ induced from
the orientation of $N$.

A \emph{transversal with endpoints} of a foliation is
a closed interval such that
its interior is transverse to the foliation.
By a \emph{transversal} of a foliation, 
we will always mean a transversal with endpoints.
A transversal of a co-oriented foliation is
\emph{positively oriented} (resp. \emph{negatively oriented}) if
its direction is consistent with (resp. opposite to) the
co-orientation on the foliation.

Throughout this paper,
we will always have the following notations:

\begin{notation}\rm\label{M}
	Let $M$ be a closed, orientable, irreducible $3$-manifold with left orderable fundamental group.
	Let $p: \widetilde{M} \to M$ be the universal covering of $M$.
	We choose an orientation on $M$,
	which induces an orientation on $\widetilde{M}$ such that $p$ is orientation preserving.
	
	(a)
	We fix a base point $\widetilde{x} \in \widetilde{M}$,
	and we denote $p(\widetilde{x})$ by $x$.
	Let $G = \pi_1(M,x)$.
	For each $h \in G$,
	we denote by $t_h: \widetilde{M} \to \widetilde{M}$ 
	the deck transformation of $\widetilde{M}$ such that:
	if $\eta \subseteq \widetilde{M}$ is a path that 
	starts at $\widetilde{x}$ and ends at $t_h(\widetilde{x})$,
	then $[p(\eta)] = h$.
	
	(b)
	We fix a left-invariant order ``$<$'' of $G$.
\end{notation}

\subsection{Branched surfaces}

In this section,
we review some background materials on branched surfaces.
The branched surface is an important tool to
describe taut foliations (cf. \hyperref[Ga2]{[Ga2]}) and
essential laminations (cf. \hyperref[GO]{[GO]} and \hyperref[Li]{[Li]}).
Our notations and definitions of branched surfaces
follow from
\hyperref[FO]{[FO]},
\hyperref[O]{[O]},
\hyperref[Ga2]{[Ga2]},
\hyperref[Li]{[Li]}:

\begin{figure}\label{standard spine picture}
	\centering
	\includegraphics[width=0.3\textwidth]{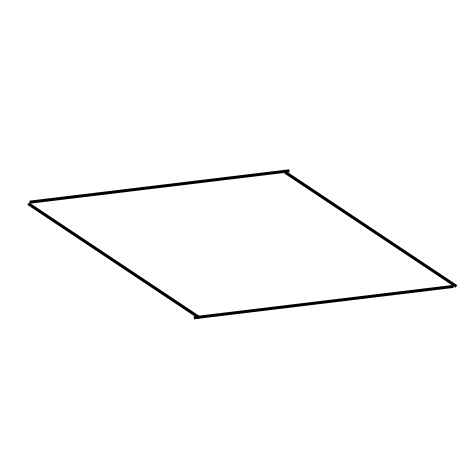}
	\includegraphics[width=0.3\textwidth]{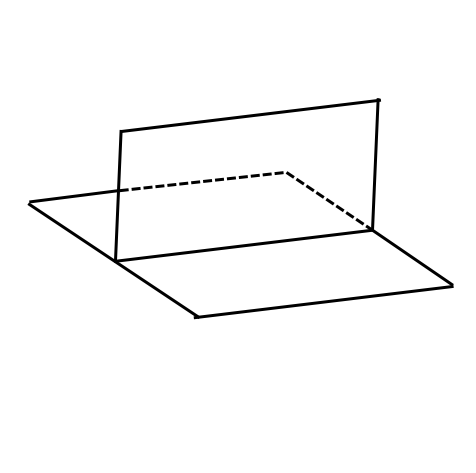}
	\includegraphics[width=0.3\textwidth]{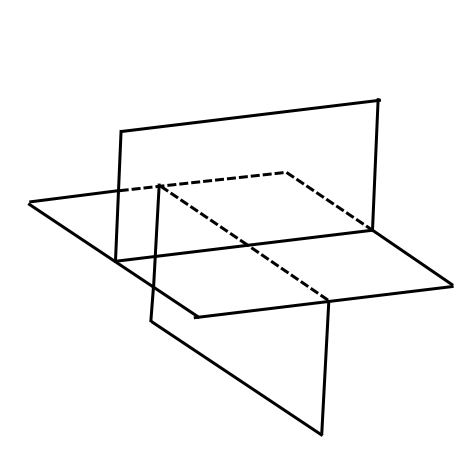}
	\caption{The local models of
		standard spines (\hyperref[Cas]{[Cas]}).}
\end{figure}

\begin{figure}\label{branched surface}
	\centering
	\includegraphics[width=0.3\textwidth]{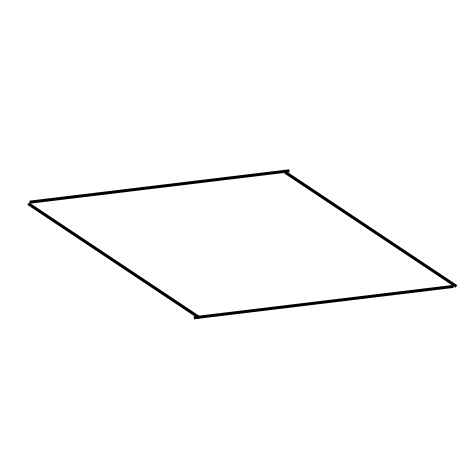}
	\includegraphics[width=0.3\textwidth]{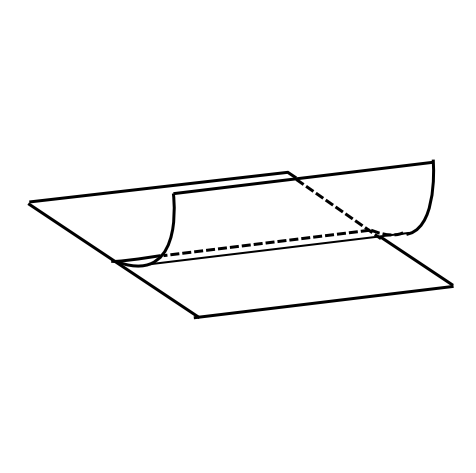}
	\includegraphics[width=0.3\textwidth]{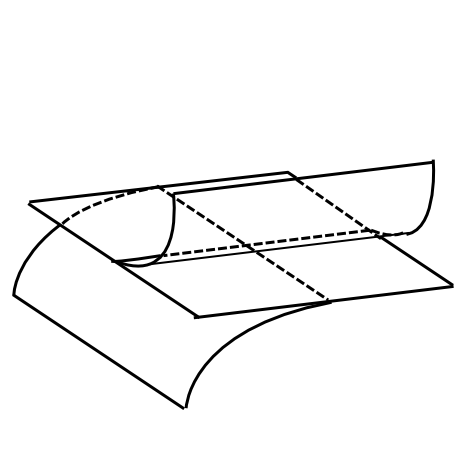}
	\caption{The local models of
		branched surfaces.}
\end{figure}

\begin{defn}[Standard spine]\rm \label{standard spine}
	(a)
	A standard spine (\hyperref[Cas]{[Cas]}) 
	is a $2$-complex
	locally modeled as Figure \ref{standard spine picture}.
	
	(b)
	Let $S$ be a standard spine.
	Let $L$ be the set of points in $S$
	that have no Euclidean neighborhood in $S$.
	We call each component of $S \setminus \setminus L$
	a \emph{sector} of $S$.
	And we call each point in $L$ that 
	has no $\mathbb{R}$-neighborhood in $L$
	a \emph{double point}.
\end{defn}

\begin{defn}\rm \label{branched surface definition}
	A \emph{branched surface} $B$ in $M$ is 
	an embedded standard spine with 
	well-defined cusp directions,
	which is
	locally modeled as Figure \ref{branched surface}.
\end{defn}

\begin{notation}\rm\label{branched surface notation}
Let $B$ be a branched surface in $M$.
	
(a)
Let $N(B)$ denote a regular neighborhood of the branched surface $B$
as shown in Figrue \ref{fibered neighborhood}.
Then $N(B)$ can be regarded as 
an interval bundle over $B$,
and we call these fibers
\emph{$I$-fibers} (or \emph{interval fibers}).
$N(B)$ is called
a \emph{fibered neighborhood} of $B$.

(b)
$\partial N(B)$ can be decomposed to 
$\partial_h N(B)$ (\emph{horizontal boundary})
and $\partial_v N(B)$ (\emph{vertical boundary}) such that:
$\partial_h N(B)$ is transverse to $I$-fibers,
and $\partial_v N(B)$ is tangent to $I$-fibers.

(c)
Let $\pi: N(B) \to B$ 
be the projection that
takes each $I$-fiber to a single point.
We call $\pi$ the \emph{collapsing map} for $N(B)$.

(d)
A lamination $\mathcal{L}$ 
is \emph{carried} by $B$
if there exists a fibered neighborhood $N(B)$
such that 
$\mathcal{L} \subseteq N(B)$ and
each leaf of $\mathcal{L}$ is transverse to 
the $I$-fibers of $N(B)$.
$\mathcal{L}$ 
is \emph{fully carried} by $B$
if it is carried by $B$ and intersects
every $I$-fiber of $N(B)$.

(e)
The \emph{branch locus} of $B$
is the set of points in $B$
that have no Euclidean neighborhood in $B$.
We denote it by $L(B)$.
Each component of $B \setminus \setminus L(B)$ 
is called a \emph{branch sector}.
We call each point in $L(B)$
that has no $\mathbb{R}$-neighborhood in $L(B)$
a \emph{double point} of $L(B)$.
Let $V$ denote the set of double points of $L(B)$.
We call each component of 
$L(B) \setminus \setminus V$
an \emph{edge} of $L(B)$.
\end{notation}

\begin{figure}\label{fibered neighborhood}
	\includegraphics[width=0.7\textwidth]{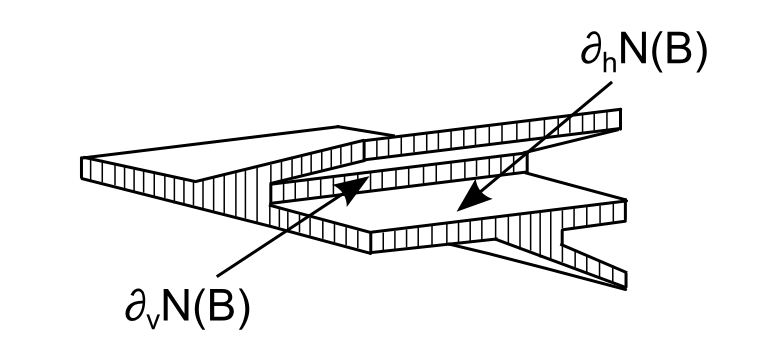}
	\caption{The fibered neighborhood $N(B)$.}
\end{figure}

\begin{remark}\rm\label{pull-back branched surface}
	Suppose that $B$ is a branched surface in $M$.
	Let
	$\widetilde{B} = p^{-1}(B) \subseteq \widetilde{M}$.
	
	(a)
	By convention,
	we also say that
	$\widetilde{B}$ is a \emph{branched surface}
	in $\widetilde{M}$.
	
	(b)
	We'd like to have the fibered neighborhood of $\widetilde{B}$ be
	$\pi_1$-equivariant.
	We call $\widetilde{N(B)}$ a \emph{fibered neithborhood} of
	$\widetilde{B}$ if
	there is a fibered neighborhood $N(B)$ of $B$
	such that
	$\widetilde{N(B)} = p^{-1}(N(B))$,
	and we will always assume that 
	the $I$-fibers of $\widetilde{N(B)}$
	are the pull-back of the $I$-fibers of $N(B)$.
	Let
	$\widetilde{\pi}: \widetilde{N(B)} \to \widetilde{B}$ be
	the map which collapses every $I$-fiber of
	$\widetilde{N(B)}$ into a point of $\widetilde{B}$.
	We also call $\widetilde{\pi}$ the \emph{collapsing map} for
	$\widetilde{N(B)}$.
	And we adopt the notations in
	Notation \ref{branched surface notation} (b), (d), (e) for $\widetilde{B}$.
\end{remark}

\subsection{Blowing-up/down}
Blowing-up/down a leaf of a foliation describes the following operation:

\begin{defn}[Blowing-up/down]\rm\label{blow-up}
	\emph{Blowing-up} a leaf $\lambda$ of a foliation is to 
	replace it by a product 
	$\lambda \times I$
	foliated with leaves $\{\lambda \times \{t\} \mid t \in I\}$,
	and its inverse operation is called
	\emph{blowing-down}
	(cf. \hyperref[Cal4]{[Cal4, Example 4.14]}).
	Such a bundle $\lambda \times I$ is called 
	a \emph{pocket} of the foliation.
	The blowing-up operation can be done for
	a countable collection of disjoint leaves simultaneously,
	and the blowing-down operation can be done for
	a countable collection of disjoint pockets simultaneously.
\end{defn}

\section{Constructing a branched surface $B$ of $M$}\label{section 3}

In this section,
we construct a branched surface $B$ in $M$ through the following two steps:
(1)
choosing a fundamental domain $\Gamma$ of $M$ in $\widetilde{M}$ such that
$p(\partial \Gamma)$ is a standard spine
(cf. Subsection \ref{subsection 3.1}),
(2)
orienting the sectors of $p(\partial \Gamma)$ through ``$<$'' to
obtain a branched surface $B$ in $M$
(cf. Subsection \ref{subsection 3.2}).

\subsection{Choosing a fundamental domain 
$\Gamma$ of $M$ in $\widetilde{M}$}\label{subsection 3.1}

Our setting for fundamental domains of $M$ in $\widetilde{M}$ is as follows:

\begin{notation}\rm\label{domain}
	(a)
	We call $\Gamma$ a \emph{fundamental domain} of $M$ in $\widetilde{M}$
	if $\Gamma \subseteq \widetilde{M}$ is 
	a connected, compact, contractible polyhedron
	with topologically locally flat boundary,
	$p \mid_{Int(\Gamma)}$ is injective,
	$p \mid_{\Gamma}$ is surjective,
	and $\partial \Gamma$
	is a finite union of polygons
	that are identified pairwise by deck transformations.
	
	(b)
	A compact region $F \subseteq \partial \Gamma$
	is called a \emph{face} of $\Gamma$
	if there is
	$h \in G - \{1\}$ such that 
	$F$ is a component of
	$\Gamma \cap t_h(\Gamma)$,
	and there is no $g \in G - \{1,h\}$
	such that 
	$F \subseteq \Gamma \cap t_g(\Gamma)$.
	$F$ is also called 
	a \emph{common face} of
	$\Gamma$ and $t_h(\Gamma)$.
\end{notation}

\begin{fact}\rm
	$\widetilde{M}$ is homeomorphic to $\mathbb{R}^{3}$,
	and every fundamental domain of $M$ in $\widetilde{M}$ is homeomorphic to a compact $3$-ball.
\end{fact}
\begin{proof}
	$G$ is torsion-free since it is left orderable
	(cf. \hyperref[BRW]{[BRW, Proposition 2.1]}).
	The fact follows.
\end{proof}

\begin{fact}\rm
	Let $\Gamma$ be a fundamental domain of $M$ in $\widetilde{M}$.
	Then every compact set in $\widetilde{M}$ only meets
	finitely many members in $\{t_g(\Gamma) \mid g \in G\}$.
\end{fact}
\begin{proof}
	This follows from \hyperref[Gr]{[Gr, Lemma 1]}.
\end{proof}

\begin{notation}\rm
	Let $\Gamma$ be a fundamental domain of $M$ in $\widetilde{M}$.
	We denote by $G(\Gamma)$
	the set of points in $p(\partial \Gamma)$
	that have no 
	$\mathbb{R}^{2}$-neighborhood in $p(\partial \Gamma)$.
	A point in $G(\Gamma)$ is called a \emph{vertex} of $G(\Gamma)$ if
	it has no $\mathbb{R}^{1}$-neighborhood in $G(\Gamma)$.
	And we call every component of 
	$G(\Gamma) \setminus \setminus \{$vertices of $G(\Gamma)\}$ 
	an \emph{edge} of $G(\Gamma)$.
	An edge $e$ of $G(\Gamma)$ is called $k$-\emph{valent} if
	$p^{-1}(e) \cap \partial \Gamma$ has $k$ components.
	A vertex $v$ of $G(\Gamma)$ is called \emph{standard} if
	$v$ has a neighborhood in $p(\partial \Gamma)$ which is
	homeomorphic to 
	the local model of a double point of a standard spine
	(cf. the rightmost picture of Figure \ref{standard spine picture}).
	And we call every component of $p(\partial \Gamma) \setminus \setminus G(\Gamma)$
	a \emph{sector of $p(\partial \Gamma)$}.
\end{notation}

\begin{lm}\label{standard spine proof}
	There exists a fundamental domain $\Gamma$ of $M$ in $\widetilde{M}$
	such that $p(\partial \Gamma)$
	is a standard spine.
\end{lm}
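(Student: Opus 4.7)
The plan is to realise $\Gamma$ as a Dirichlet fundamental domain for the deck action, with respect to a sufficiently generic $G$-invariant Riemannian metric and basepoint. Fix any smooth Riemannian metric on $M$, lift it to a $G$-invariant metric $\tilde{g}$ on $\widetilde{M}\cong\mathbb{R}^{3}$, and pick a basepoint $x$. Set
\[
\Gamma \;=\; \{\,y\in\widetilde{M}\ :\ \tilde d(y,x)\le\tilde d(y,t_hx)\text{ for all }h\in G\,\}.
\]
Because $G$ acts freely, properly, and cocompactly, this is a compact, connected, contractible polyhedron in $\widetilde{M}$; $p|_{\mathrm{Int}(\Gamma)}$ is injective, $p|_{\Gamma}$ is surjective, and $\partial\Gamma$ decomposes into faces $F_h=\Gamma\cap t_h\Gamma$ paired by $t_h\colon F_{h^{-1}}\to F_h$. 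Thus any Dirichlet region is already a fundamental domain in the sense of Subsection \ref{convention}; the substantive content of the lemma is arranging that $p(\partial\Gamma)$ is \emph{standard} rather than a 2-complex with more degenerate singularities.

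To this end I would verify that for generic $(\tilde g,x)$, each point $y\in\partial\Gamma$ lies on exactly one, two, or three bisectors $B_h=\{z:\tilde d(z,x)=\tilde d(z,t_hx)\}$, stratifying $\partial\Gamma$ into open faces, edges, and isolated vertices. Projecting to $M$, these strata will realise the three local models of a standard spine. At a face interior two translates of $\Gamma$ meet across a single bisector, giving a smooth 2-manifold point of $p(\partial\Gamma)$. At an edge point lying on $B_{h_1}\cap B_{h_2}$, the equidistant condition forces the bisector between $t_{h_1}\Gamma$ and $t_{h_2}\Gamma$ to pass through as well, and after translating by $t_{h_1}^{-1}$ that bisector corresponds to a face $F_{h_1^{-1}h_2}$ of $\Gamma$ itself; so three distinct face-images meet along the image edge, producing the triple-edge model. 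At a vertex of $\Gamma$, four translates concur, the six pairwise bisectors through $y$ produce six face-images meeting at $p(y)$, and their combinatorics are forced to realise the cone on the 1-skeleton of a tetrahedron, which is the standard-spine vertex model.

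The main obstacle is the genericity statement: one must know that quadruple bisector concurrences on $\partial\Gamma$ can be generically avoided. In the Fr\'echet space of $G$-invariant smooth Riemannian metrics on $\widetilde{M}$ paired with basepoints, for each ordered quadruple $(h_1,h_2,h_3,h_4)$ of distinct non-identity elements of $G$, the locus on which $B_{h_1}\cap B_{h_2}\cap B_{h_3}\cap B_{h_4}\neq\emptyset$ inside a fixed compact window is closed of positive codimension, being cut out by an overdetermined equidistance system. Because only finitely many $h$ contribute to faces of $\Gamma$ within any compact window and $G$ is countable, a Baire-category argument yields a dense set of $(\tilde g,x)$ for which no four bisectors are concurrent anywhere in $\partial\Gamma$; any such choice produces the desired $\Gamma$, with $p(\partial\Gamma)$ a standard spine.
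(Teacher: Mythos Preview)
Your approach via Dirichlet domains is genuinely different from the paper's. The paper begins with an \emph{arbitrary} fundamental domain $\Gamma_0$ and modifies it by a finite sequence of explicit local moves (``thicken an edge'', ``thicken a vertex''): whenever an edge of the singular graph $G(\Gamma_i)\subset p(\partial\Gamma_i)$ is not $3$-valent, one excises from $\Gamma_i$ all lifts of a small $B^2$-bundle over that edge and reattaches a single chosen lift, producing a fundamental domain $\Gamma_{i+1}$ with strictly fewer bad edges; once all edges are $3$-valent, the analogous move at each vertex not of valence $2$ or $4$ finishes the job. Each step stays in the class of fundamental domains and strictly decreases a nonnegative integer count, so the process terminates at a $\Gamma$ with $p(\partial\Gamma)$ a standard spine. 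This is purely combinatorial---no metric, no transversality.

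Your strategy is reasonable, but as written the crucial genericity step is only a sketch, and you flag this yourself. Asserting that the bad locus in the space of metrics and basepoints has ``positive codimension, being cut out by an overdetermined equidistance system'' is plausible but not yet a proof: it needs a genuine parametric transversality argument, and before even that you must know the bisectors $B_h$ are smooth hypersurfaces near $\partial\Gamma$, which can fail wherever $\partial\Gamma$ meets the conjugate locus of some $t_hx$ in the lifted metric (the distance function $\tilde d(\cdot,t_hx)$ is not differentiable there). These issues can likely be repaired---say by restricting to metrics without conjugate points, or by invoking theorems on the stratification of generic cut loci---but the paper's thickening argument sidesteps all of this and is completely self-contained.
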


\begin{proof}
	Let $\Gamma_0$
	be a fundamental domain of $M$ in $\widetilde{M}$.
	Starting from $\Gamma_0$,
	we repeat the following operation 
	to obtain $\Gamma_{i+1}$ from $\Gamma_i$ ($i \geqslant 0$):
	
	$\bullet$
	(Thicken an edge)
	Assume that $\Gamma_i$ is 
	a fundamental domain of $M$ in $\widetilde{M}$,
	and there is an edge $e$ of $G(\Gamma_i)$
    which is not $3$-valent.
	Let $N_e = e \times B^{2}$ be 
	a sufficiently small embedded disk bundle over $e$ in $M$ such that
	$e = N_e \cap G(\Gamma_i)$.
	Let $\widetilde{e}$ be a component of
	$p^{-1}(e)$ such that
	$\widetilde{e} \subseteq \partial \Gamma_i$.
	Let $\widetilde{N_e}$ be the component of $p^{-1}(N_e)$
	such that $\widetilde{e} \subseteq \widetilde{N_e}$.
	Let 
	$$\Gamma_{i+1} = \overline{\Gamma_i - p^{-1}(N_e)} \cup \widetilde{N_e}$$
	(see Figure \ref{thicken an edge} for the change).
	Then $\Gamma_{i+1}$ is still 
	a fundamental domain of $M$ in $\widetilde{M}$,
	and every new edge of $G(\Gamma_{i+1})$ produced in this step
	has valency $3$.
	Thus,
	\begin{center}
		$|\{s$ is an edge of $G(\Gamma_{i+1})$, $s$ is not $3$-valent$\}|
		< 
		|\{s$ is an edge of $G(\Gamma_i)$, $s$ is not $3$-valent$\}|$.
	\end{center}

\begin{figure}\label{thicken an edge}
	\centering
	\subfigure[]{
	\includegraphics[width=0.4\textwidth]{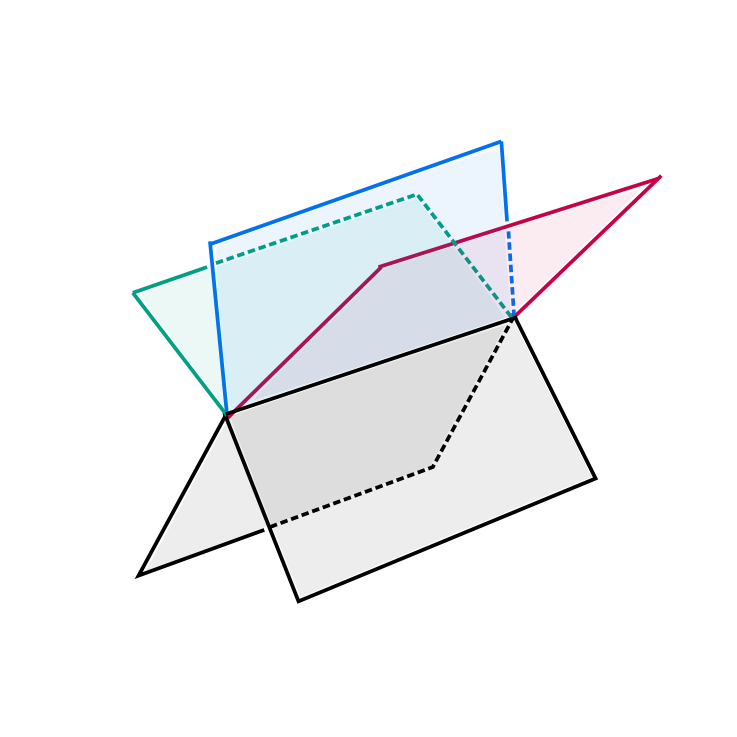}}
    \subfigure[]{
	\includegraphics[width=0.4\textwidth]{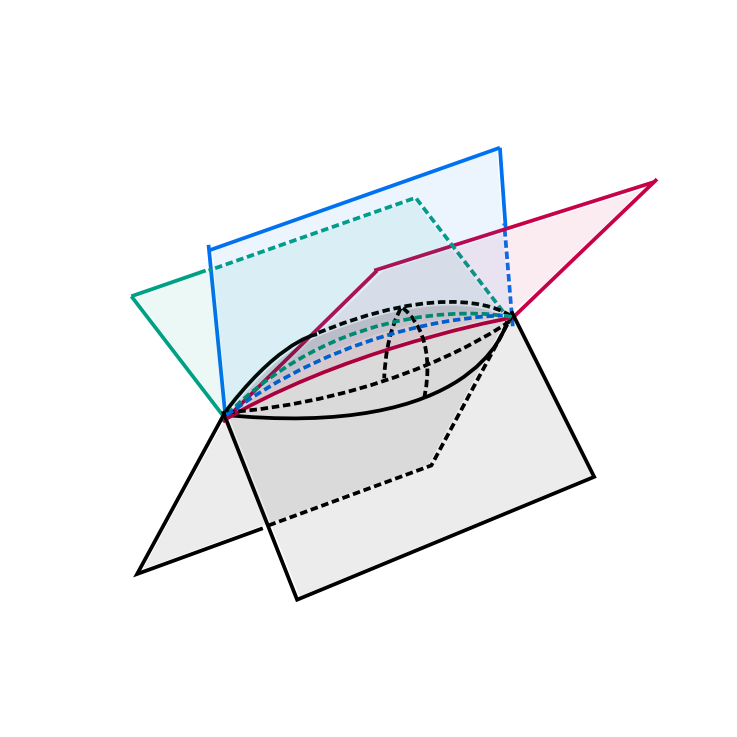}}
	\caption{(a) is the picture near
		$\widetilde{e}$ before the thickening,
		and (b) is the picture near
		$\widetilde{e}$ after the thickening.
		Here,
		the two grey faces in (a) are contained in $\partial \Gamma_i$,
		and the grey faces in (b) are contained in $\partial \Gamma_{i+1}$.}
\end{figure}
	
	We can repeat the above operation inductively,
	until we obtain $\Gamma_k$ ($k \in \mathbb{N}$)
	such that 
	all edges of $G(\Gamma_k)$ have valency $3$.
	Starting from $\Gamma_k$,
	we repeat the following operation
	to obtain $\Gamma_{i+1}$ from $\Gamma_i$ ($i \geqslant k$):
	
	$\bullet$
	(Thicken a vertex)
	Assume that $\Gamma_i$ is 
	a fundamental domain of $M$ in $\widetilde{M}$
	such that every edge of $G(\Gamma_i)$ is $3$-valent,
	and there is a vertex $v$ of $G(\Gamma_i)$
	which is not standard.
	Let $N_v$ be a sufficiently small closed regular neighborhood of
	$v$ in $M$ so that:
	$N_v \cap p(\partial \Gamma_i)$ is 
	a closed neighborhood of $v$ in $p(\partial \Gamma_i)$ that
	does not contain any other vertices of $G(\Gamma_i)$,
	and $N_v \cap G(\Gamma_i)$ is connected.
	Let $\widetilde{v} \in p^{-1}(v)$ such that
	$\widetilde{v} \in \partial \Gamma_i$.
	Let $\widetilde{N_v}$ be the component of $p^{-1}(N_v)$
	such that $\widetilde{v} \in \widetilde{N_v}$.
	Let 
	$$\Gamma_{i+1} = \overline{\Gamma_i - p^{-1}(N_v)} \cup \widetilde{N_v}$$
	(see Figure \ref{thicken a vertex} for the change).
	Then $\Gamma_{i+1}$ is still 
	a fundamental domain of $M$ in $\widetilde{M}$.
	Each vertex of $G(\Gamma_{i+1})$ that lies on $\partial N_v$ is
	the intersection of $\partial N_v$ and 
	an edge of $G(\Gamma_i)$ (which is necessarily $3$-valent),
	so it is standard.
	And each edge of $G(\Gamma_{i+1})$ that lies on $\partial N_v$
	is the intersection of $\partial N_v$ and a sector of $p(\partial \Gamma_i)$,
	so it has valency $3$.
	Hence
	\begin{center}
		$|\{$vertices of $G(\Gamma_{i+1})$ 
		which are not standard$\}|
		< 
		|\{$vertices of $G(\Gamma_i)$
		which are not standard$\}|$,
	\end{center}
	and
	all new edges produced in this process still have valency $3$.
	
	\begin{figure}\label{thicken a vertex}
		\centering
		\subfigure[]{
		\includegraphics[width=0.4\textwidth]{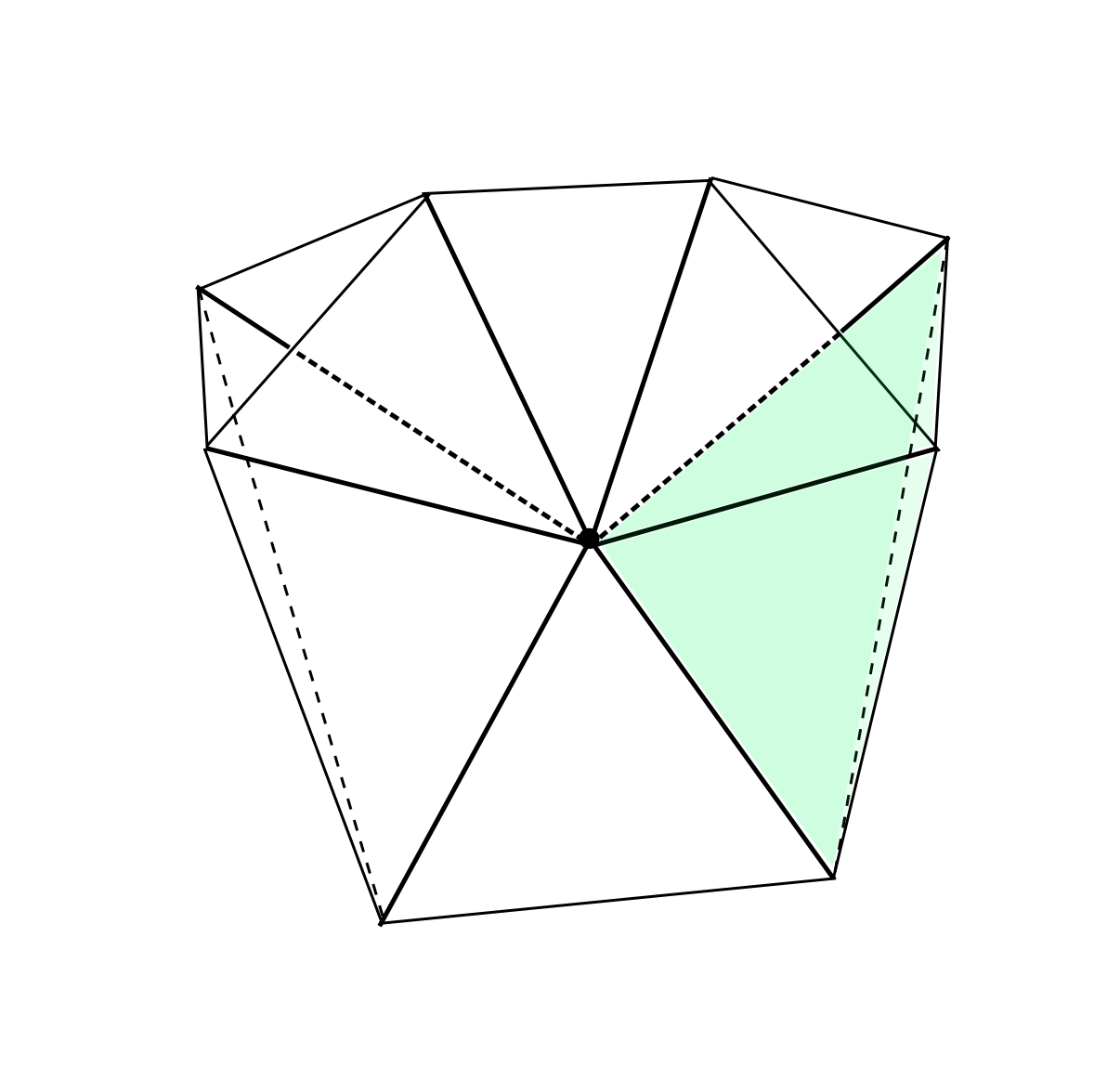}}
		\subfigure[]{
		\includegraphics[width=0.4\textwidth]{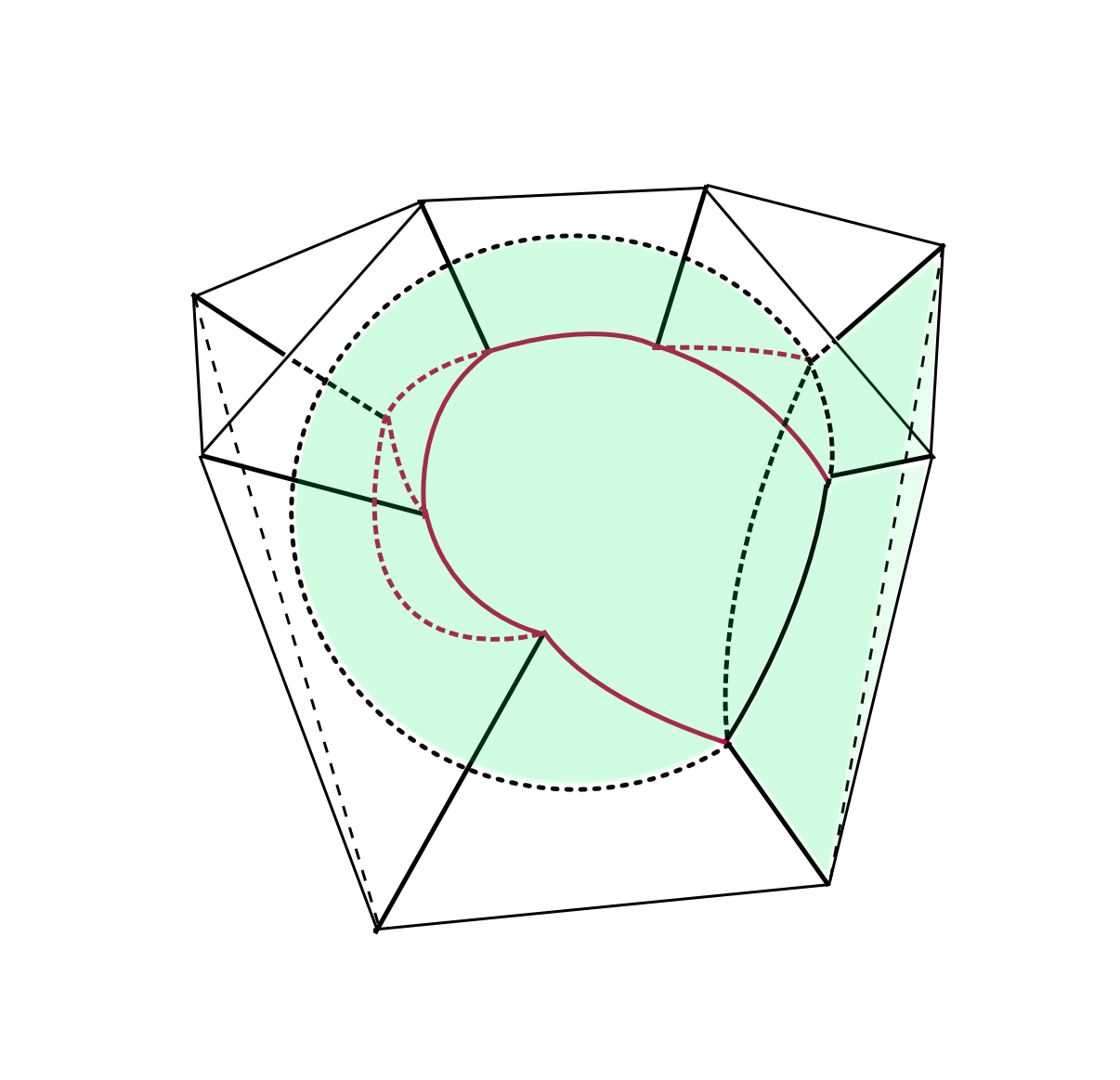}}
		\caption{(a) is the picture near $\widetilde{v}$ before the thickening,
			and (b) is the picture near $\widetilde{v}$ after the thickening.
			Here,
			the green faces in (a) are contained in $\partial \Gamma_i$,
			the green faces in (b) are contained in $\partial \Gamma_{i+1}$,
		    and the red curves in (b) are projected to the edges of $G(\Gamma_{i+1})$ which
	        are produced or changed in this step.}
	\end{figure}
	
	We can repeat this operation inductively.
	At last,
	we can obtain a fundamental domain $\Gamma_n$ such that
	all vertices of $G(\Gamma_n)$ are standard,
	and all edges of $G(\Gamma_n)$
	are still $3$-valent.
	Then
	$p(\partial \Gamma_n)$ is a standard spine.
	We can accomplish the proof of
	Lemma \ref{standard spine proof}
	by choosing
	$\Gamma = \Gamma_n$.
\end{proof}

Guaranteed by Lemma \ref{standard spine proof},
we can choose a fundamental domain $\Gamma$ of $M$ in $\widetilde{M}$ 
such that $p(\partial \Gamma)$ is a standard spine.
Without loss of generality,
we assume that the base point $\widetilde{x}$ (cf. Notation \ref{M} (a)) is contained in 
$Int(\Gamma)$.
And we assume that every member of
$\{t_g(\Gamma) \mid g \in G\}$ has an orientation induced from
the orientation on $\widetilde{M}$.

\subsection{Orienting the sectors of $p(\partial \Gamma)$ to obtain
a branched surface of $M$}\label{subsection 3.2}

Recall from Notation \ref{domain} (b),
a common face of $\Gamma$ and $t_h(\Gamma)$ ($h \in G - \{1\}$) is
a component of
$\Gamma \cap t_h(\Gamma)$ such that there is no $g \in G - \{1,h\}$ with
$F \subseteq \Gamma \cap t_g(\Gamma)$.
 
\begin{defn}\rm\label{face}
	Suppose that $F$ is 
	a common face of $\Gamma$ and $t_h(\Gamma)$ for some
	$h \in G - \{1\}$.
	We call $F$
	a \emph{positive face} (resp. \emph{negative face}) of $\Gamma$ if 
	$h > 1$ (resp. $h < 1$).
	For every $r \in G$ and every face $J$ of 
	the fundamental domain $t_r(\Gamma)$,
	there is a face $K$ of $\Gamma$ such that 
	$t_r(K) = J$.
	We call $J$ a \emph{positive face} (resp. \emph{negative face}) of $t_r(\Gamma)$ if 
	$K$ is 
	a positive face (resp. negative face) of $\Gamma$.
\end{defn}

\begin{remark}\rm
	The covering map $p: \widetilde{M} \to M$ takes
	$\{\text{faces of } \Gamma\}$ to
	$\{\text{sectors of } p(\partial \Gamma)\}$.
	In particular,
	$\{\text{positive faces of } \Gamma\}$ is in
	one-to-one correspondence with
	$\{\text{sectors of } p(\partial \Gamma)\}$.
\end{remark}

For every face in every member of $\{t_g(\Gamma)\}$,
``$<$'' determines if it is positive/negative:

\begin{lm}\label{orientation}
	Suppose that $F$ is a common face of two fundamental domains
	$t_h(\Gamma)$ and $t_g(\Gamma)$
	($h,g \in G$ and $h \ne g$).
	Without loss of generality,
	we assume that $h > g$.
	Then $F$ is a positive face of 
	$t_g(\Gamma)$
	and is a negative face of $t_h(\Gamma)$.
\end{lm}
\begin{proof}
	$t^{-1}_{g}(F)$ is a common face of
	$\Gamma$ and $t_{g^{-1}h}(\Gamma)$.
    We have $g^{-1}h > 1$ since $h > g$.
    Then $t^{-1}_{g}(F)$ is a positive face of $\Gamma$,
    which implies that
    $F$ is a positive face of $t_g(\Gamma)$.
    
    Similarly,
    $t^{-1}_{h}(F)$ is a common face of
    $\Gamma$ and $t_{h^{-1}g}(\Gamma)$, 
    and $h^{-1}g < 1$.
    So $t^{-1}_{h}(F)$ is a negative face of $\Gamma$,
    and then
    $F$ is a negative face of $t_h(\Gamma)$.
\end{proof}

Then we orient the faces of $\{t_g(\Gamma)\}$ in 
a $\pi_1$-equivariant way,
which induces a co-orientation on the sectors of $p(\partial \Gamma)$:

\begin{defn}\label{orienting}\rm
	(a)
	For each $h \in G$ and each face $F$ of $t_h(\Gamma)$,
	we assign $F$ positive orientation 
	(resp. negative orientation) with respect to 
	the orientation on $t_h(\Gamma)$ if
	$F$ is a positive face 
	(resp. negative face) of $t_h(\Gamma)$.
	By Lemma \ref{orientation},
	every face is a positive face of a member of 
	$\{t_g(\Gamma)\}_{g \in G}$ that
	contains it,
	and it is a negative face of the other member of 
	$\{t_g(\Gamma)\}_{g \in G}$ that contains it.
	Hence the assignment is well-defined.
	
	(b)
	By Definition \ref{face},
	the assignment in (a) is equivariant under deck transformations.
	So it induces
	a co-orientation on the sectors of the standard spine
	$p(\partial \Gamma)$.
	We assign $p(\partial \Gamma)$ the cusp directions
	given by this co-orientation.
\end{defn}

At last, 
we verify that 
$p(\partial \Gamma)$
is a branched surface.

\begin{lm}\label{tangent space}
	$p(\partial \Gamma)$ is a branched surface.
\end{lm}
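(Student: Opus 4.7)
The plan is to check the branched-surface local models of Figure~\ref{branched surface} pointwise. Lemma~\ref{standard spine proof} already realizes $p(\partial\Gamma)$, as an unoriented space, as a standard spine, so its strata are Euclidean sectors, interior points of edges of the branch locus, and vertices. There is nothing to check on the sectors, so the entire content of the lemma is to verify that the sector co-orientations induced by the order assemble into well-defined cusp directions on the remaining two strata.

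First, at an interior point of an edge I would lift to $\widetilde{M}$: exactly three fundamental-domain copies $t_{h_1}(\Gamma)$, $t_{h_2}(\Gamma)$, $t_{h_3}(\Gamma)$ meet along the preimage edge, which I may reorder so that $h_1 < h_2 < h_3$. Lemma~\ref{equivariant face} then pins down each of the three co-orientations on their common faces: every face is oriented from the copy with smaller index toward the copy with larger index. In the cross-section transverse to the edge this gives two arrows pointing out of the $t_{h_1}(\Gamma)$-region and two arrows pointing into the $t_{h_3}(\Gamma)$-region, which is exactly the combinatorial signature of the branch-edge model of Figure~\ref{branched surface}: two sheets smooth tangentially while the third branches off, and the cusp direction is well-defined.

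At a vertex I would argue the same way. The lift sits at a point whose link is (contained in) the $1$-skeleton of a tetrahedron, corresponding to four pairwise-adjacent fundamental-domain copies $t_{h_1}(\Gamma),\ldots,t_{h_4}(\Gamma)$ with $h_1 < h_2 < h_3 < h_4$. Each of the four edges emanating from the vertex carries three of these copies, so the edge analysis of the previous paragraph applies along each. The decisive point is that the total order on $G$ plays the role of a global ``height function'': every face is co-oriented from the lower-index copy to the higher-index copy, so the cusp choices forced along the four edges are all dictated uniformly by the same order.

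The main obstacle I anticipate is the vertex compatibility: one has to verify that the cusp directions prescribed independently along the four incident edges really do fit together into a valid local branched-surface model at the center. The clean route is to read everything off the order -- at each of the four adjacent fundamental-domain copies the incident faces split into ``incoming'' and ``outgoing'' arrows by the order, and this dichotomy is coherent across all four copies. This is precisely the data of a branched-surface vertex as in Figure~\ref{branched surface}, and combined with the edge analysis it establishes the branched-surface local model at every point, proving that $p(\partial\Gamma)$ is a branched surface.
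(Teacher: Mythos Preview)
Your proposal is correct and follows essentially the same approach as the paper: lift to $\widetilde{M}$, invoke Lemma~\ref{equivariant face} so that every face is co-oriented from the lower-indexed fundamental-domain copy to the higher-indexed one, and then read off the branched-surface local model from the resulting total order on the copies meeting at an edge or vertex. The paper's proof is quite terse---it cites Lemma~\ref{equivariant face} and defers the vertex check to Figure~\ref{orient} (which depicts exactly your $h_1<h_2<h_3<h_4$ configuration)---so your version is in fact a more explicit unpacking of the same argument rather than a different one.
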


\begin{proof}
	As shown in Figure \ref{orient},
	the orientations on faces of 
	$\{t_g(\Gamma)\}_{g \in G}$ give
	well-defined cusp directions to the set of points in
	$\bigcup_{g \in G}t_g(\partial \Gamma)$
	that possess no Euclidean neighborhood.
	So $\bigcup_{g \in G}t_g(\partial \Gamma)$ 
	is locally modeled as
	Figure \ref{branched surface}.
	Correspondently,
	$p(\partial \Gamma)$
	is locally modeled as
	Figure \ref{branched surface},
	and therefore
	$p(\partial \Gamma)$ is a branched surface.
\end{proof}

\begin{figure}\label{orient}
	\centering
	\subfigure[]{
	\includegraphics[width=0.35\textwidth]{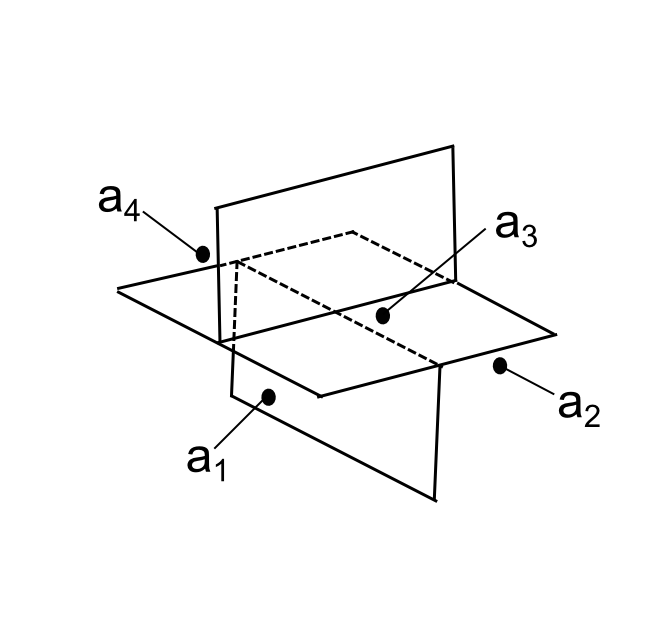}}
    \subfigure[]{
	\includegraphics[width=0.35\textwidth]{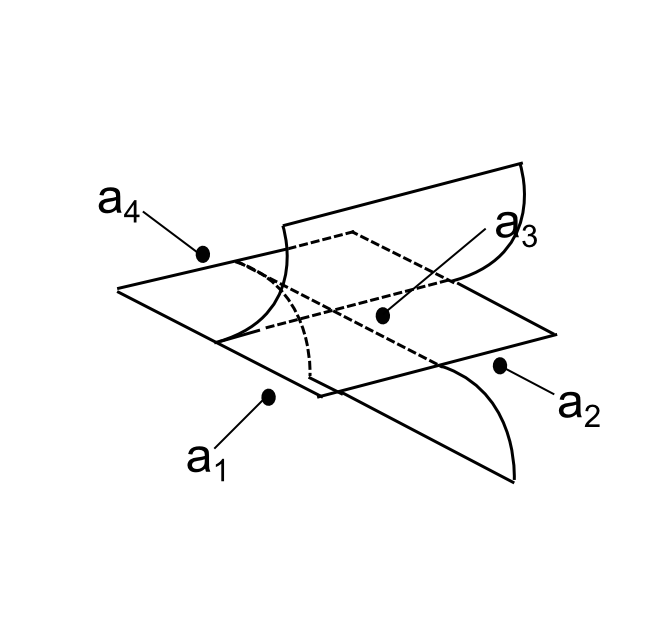}}
	\caption{Assume that the four points
	$a_1,a_2,a_3,a_4 \in \widetilde{M}$ 
	in (a)
	lies in
	$t_{h_1}(\Gamma)$,
	$t_{h_2}(\Gamma)$,
	$t_{h_3}(\Gamma)$,
	$t_{h_4}(\Gamma)$
	respectively and
	$h_1 < h_2 < h_3 < h_4$.
	Then (a) is oriented to (b).}
\end{figure}

We denote this branched surface by $B = p(\partial \Gamma)$.
We choose a fibered neighborhood 
$N(B)$ of $B$.
Let $\widetilde{B} = p^{-1}(B)$.
As in Remark \ref{pull-back branched surface},
$\widetilde{B}$ is a branched surface in $\widetilde{M}$,
and $\widetilde{B}$ has 
a fibered neighborhood $\widetilde{N(B)} = p^{-1}(N(B))$ with
$\pi_1$-equivariant interval fibers.
We denote by 
$\widetilde{\pi}: \widetilde{N(B)} \to \widetilde{B}$
the collapsing map for $\widetilde{N(B)}$.

\section{The proof of Theorem \ref{theorem 1}}\label{section 4}

In this section,
we follow all assumptions, settings and notations from last section,
including
the fundamental domain $\Gamma$ of $M$ in $\widetilde{M}$ and
the branched surface $B$ in $M$.

\subsection{The action of $G$ on $\mathbb{R}$}\label{subsection 4.1}

To begin with,
we describe the blowing-up of an action,
which follows from 
\hyperref[D]{[D]} and
\hyperref[Cal4]{[Cal4, Construction 2.45]}.

\begin{defn}[Denjoy]\rm\label{Denjoy}
	Suppose that
	$\{r_g: \mathbb{R} \to \mathbb{R} \mid g \in G\}$ is
	an effective action of $G$ on $\mathbb{R}$ such that
	a point $v \in \mathbb{R}$ has trivial stabilizer.
	We blow-up the countable union of points 
	$\{r_g(v)\}_{g \in G}$ in $\mathbb{R}$,
	i.e. replacing every point $r_g(v)$ ($g \in G$) by 
	a blown-up interval $r_g(v) \times I$.
	We consider the complement of 
	$\bigcup_{g \in G}(r_g(v) \times I)$ in $\mathbb{R}$
	to be the same set as
	$\mathbb{R} - \bigcup_{g \in G} \{r_g(v)\}$.
	For each $h \in G$,
	let $s_h: \mathbb{R} \to \mathbb{R}$ be the map such that
	(1)
	the restriction of
	$s_h$ to $\mathbb{R} - \bigcup_{g \in G} (r_g(v) \times I)$ is equal to
	the restriction of $r_h$ to $\mathbb{R} - \bigcup_{g \in G} \{r_g(v)\}$,
	(2)
	$s_h$ takes $r_g(v) \times \{t\}$ to
	$r_{hg}(v) \times \{t\}$
	for all $g \in G$, $t \in I$.
	Then $\{s_g: \mathbb{R} \to \mathbb{R} \mid g \in G\}$ is 
	an action of $G$ on $\mathbb{R}$.
	We call it the \emph{blowing-up} of
	$\{r_g: \mathbb{R} \to \mathbb{R} \mid g \in G\}$ at $v$.
\end{defn}

Under the conditions of Definition \ref{Denjoy},
we will always assume $r_g(v) \times \{0\} < r_g(v) \times \{1\}$ for every $g \in G$.
In the following,
we define an action of $G$ on $\mathbb{R}$ which is compatible with
the left-invariant order ``$<$'' on $G$:

\begin{prop}\label{rho^'}
	There exists an effective action
	$\{\rho^{'}_{g}: \mathbb{R} \to \mathbb{R} \mid g \in G\}$ of $G$ on
	$\mathbb{R}$ and 
	$N \in \mathbb{R}$ such that:
	for arbitrary $g,h \in G$,
	$\rho^{'}_{h}(N) > \rho^{'}_{g}(N)$ if and only if $h > g$.
\end{prop}
\begin{proof}
	This follows from \hyperref[Cal4]{[Cal4, Lemma 2.43]} and
	\hyperref[Cal4]{[Cal4, Lemma 2.43, Remark]}.
	Precisely,
	through the process in \hyperref[Cal4]{[Cal4, Lemma 2.43]},
	there is an effective action of $G$ on $\mathbb{R}$ with 
	the following additional properties:
	
	$\bullet$
    There is an injective map
    $e: G \to \mathbb{R}$ such that
    $e(g) < e(h)$ for any $g,h \in G$ with $g < h$.
    
    $\bullet$
    For any $g, h \in G$,
    the action
    $g: \mathbb{R} \to \mathbb{R}$ transforms $e(h)$ to $e(gh)$.
    
    We choose $\{\rho^{'}_{g}: \mathbb{R} \to \mathbb{R} \mid g \in G\}$
    to be this action and choose
    $N = e(1)$. 
    Then the properties of the proposition are satisfied.
\end{proof}

\begin{defn}\rm\label{rho}
	Let $\{\rho_g: \mathbb{R} \to \mathbb{R} \mid g \in G\}$ be the
	blowing-up of 
	$\{\rho^{'}_{g}: \mathbb{R} \to \mathbb{R} \mid g \in G\}$ at $N$.
	We denote by $N \times I$ the blown-up interval at $N$.
	Let $N_0 = N \times \{0\}$,
	$N_1 = N \times \{1\}$.
\end{defn}

Note that
(1) $N_1 > N_0$,
(2) $N_0,N_1$ have trivial stabilizer under the action
$\{\rho_g: \mathbb{R} \to \mathbb{R} \mid g \in G\}$,
(3) for arbitrary $g,h \in G$ with $h > g$ and arbitrary $i,j \in \{0,1\}$,
$\rho_h(N_i) > \rho_g(N_j)$.

\begin{defn}\rm\label{I_F}
(a)
Let
$Sec(\widetilde{B}) = 
\{$branch sectors of $\widetilde{B}\}$.

(b)
Let $F \in Sec(\widetilde{B})$.
We denote by $s,r \in G$ for which
$F$ is a common face of $t_s(\Gamma)$, $t_r(\Gamma)$ and
$s < r$.
Let
$I_F = [\rho_s(N_1), \rho_r(N_0)]$.
\end{defn}

\begin{fact}\rm\label{psi-2}
	Let $F \in Sec(\widetilde{B})$, 
	$g \in G$.
	Then $I_{t_g(F)} = \rho_g(I_F)$.
\end{fact}
\begin{proof}
	We denote by $s,r \in G$ such that
	$F$ is a common face of $t_s(\Gamma)$, $t_r(\Gamma)$ and
	$s < r$.
	Then $t_g(F)$ is a common face of
	$t_{gs}(\Gamma)$, $t_{gr}(\Gamma)$,
	and $gr > gs$.
	By Definition \ref{I_F} (b), 
	we have
	$$I_{t_g(F)} = [\rho_{gs}(N_1),\rho_{gr}(N_0)] =
	\rho_g([\rho_s(N_1), \rho_r(N_0)]) = \rho_g(I_F).$$
\end{proof}

Recall from Subsection \ref{subsection 3.2},
a positive face of $\Gamma$ is a common face of
$\Gamma$ and $t_h(\Gamma)$ for some $h > 1$.

\begin{defn}\rm \label{psi}
	(a)
	For each positive face $F_0$ of $\Gamma$,
	we choose an orientation-preserving homeomorphism
	$$h_{F_0}: I \to I_{F_0}.$$
	
	(b)
	For each $F \in Sec(\widetilde{B})$,
	there is
	$r \in G$ and a positive face $F_0$ of $\Gamma$
	such that
	$F = t_r(F_0)$.
	Let
	$$h_F = \rho_r \circ h_{F_0}:
	I \to I_F.$$
\end{defn}

\begin{fact}\rm \label{transform}
	Let $F \in Sec(\widetilde{B})$,
	$g \in G$.
	Then
	$h_{t_g(F)} = \rho_g \circ h_F$.
\end{fact}

\begin{proof}
	There exists 
	$r \in G$ and a positive face $F_0$ of $\Gamma$
	such that
	$F = t_r(F_0)$.
	Then
	$$h_{t_g(F)} = h_{t_{gr}(F_0)} =
	\rho_{gr} \circ h_{F_0} =
	\rho_g \circ \rho_r \circ h_{F_0} =
	\rho_g \circ h_F.$$
\end{proof}

\begin{lm}\label{edge relation}
	Let $\textbf{e}$ be an edge of the branch locus of 
	$\widetilde{B}$.
	Let $F,X,Y \in Sec(\widetilde{B})$ be
	the three distinct branch sectors of $\widetilde{B}$ such that 
	$\textbf{e}$ is contained in $F,X,Y$
	and
	the cusp direction at $\textbf{e}$ points out of $X,Y$
	and points in $F$.
	We denote by 
	$u,v \in G$ such that 
	(1) $F$ is a common face between
	$t_u(\Gamma),t_v(\Gamma)$,
	(2)
	$Y \subseteq t_u(\Gamma)$,
	$X \subseteq t_v(\Gamma)$
	(cf. Figure \ref{transformation} (a)).
	And we assume without loss of generality that 
	$u > v$.
	Then:
	
	(a)
	$I_X,I_Y \subseteq I_F$
	and
	$I_X \cap I_Y = \emptyset$.
	
	(b)
	$\min I_F = \min I_X$,
	$\max I_F = \max I_Y$.
\end{lm}
\begin{proof}
	There exists exactly one
	$r \in G$ such that $X,Y \subseteq t_r(\Gamma)$
	(then $\textbf{e} \subseteq t_r(\Gamma)$, 
	and 
	the cusp direction at $\textbf{e}$ points out of $t_r(\Gamma)$,
	see Figure \ref{transformation} (a)).
	Then $u > r > v$.
	By Definition \ref{I_F} (b),
	we have
	$I_F = [\rho_v(N_1),\rho_u(N_0)]$,
	$I_X = [\rho_v(N_1),\rho_r(N_0)]$,
	$I_Y = [\rho_r(N_1),\rho_u(N_0)]$.
	Thus
	$$\min I_F = \min I_X < \max I_X < \min I_Y < \max I_Y = \max I_F,$$
	So (a), (b) holds.
\end{proof}

\begin{figure}\label{transformation}
	\centering
	\subfigure[]{
	\includegraphics[width=0.4\textwidth]{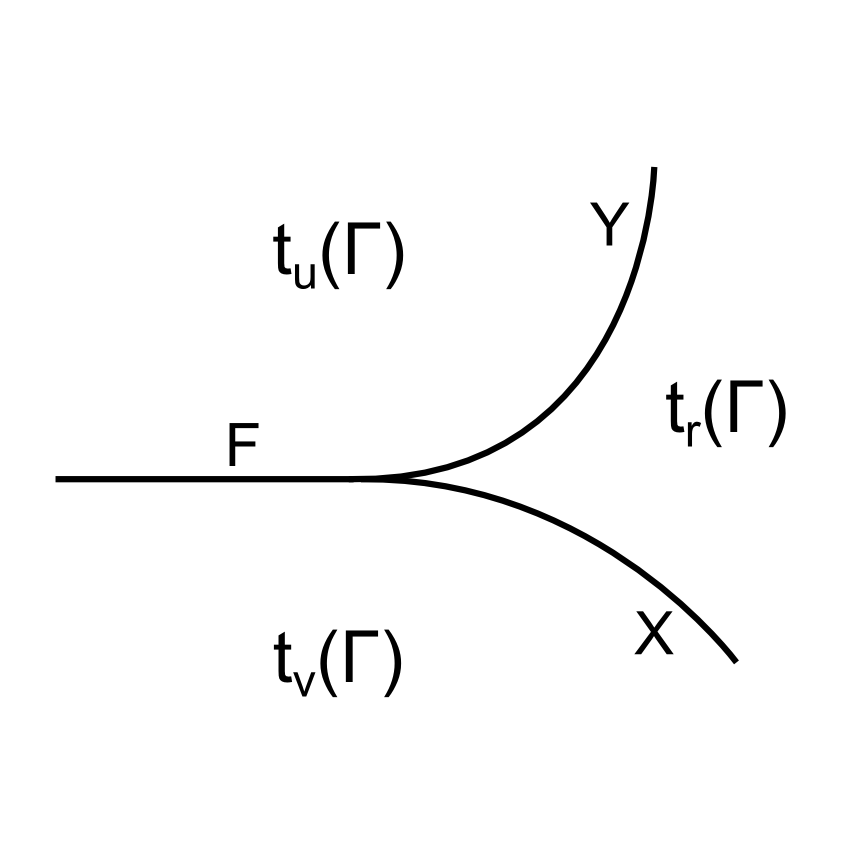}}
    \subfigure[]{
	\includegraphics[width=0.4\textwidth]{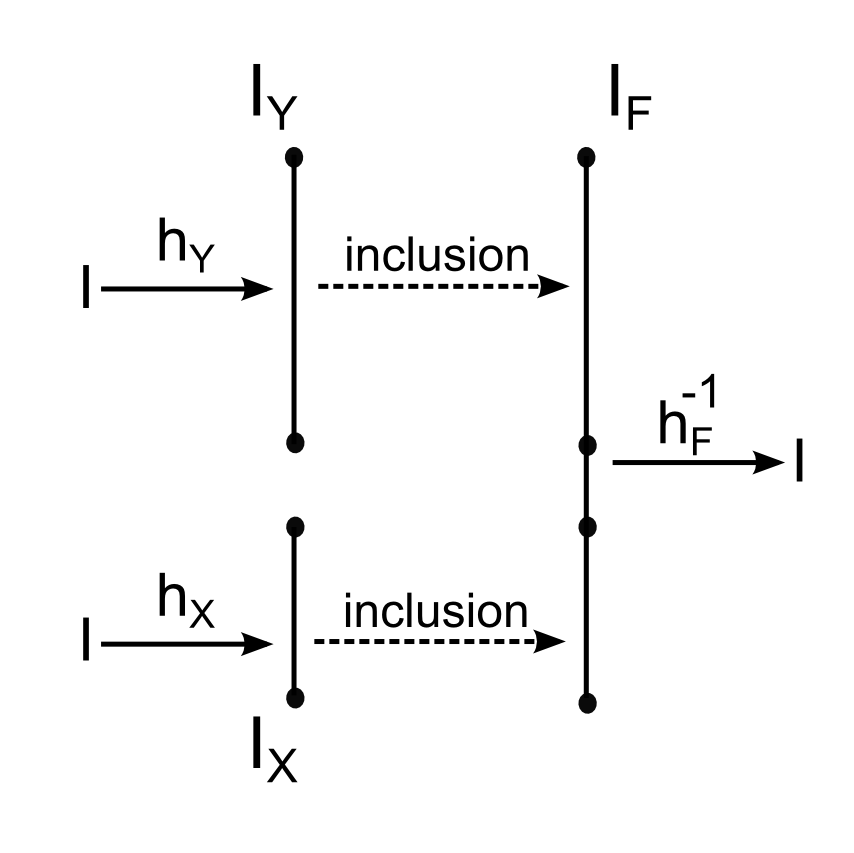}}
	\caption{(a) The picture of the branch sectors $X,Y,F$ of $\widetilde{B}$ and
		the members $t_u(\Gamma), t_v(\Gamma), t_r(\Gamma)$ in $\{t_g(\Gamma)\}_{g \in G}$,
		as given in Lemma \ref{edge relation}.
		(b) The inclusions of $I_X, I_Y$ into $I_F$,
		which induce the maps
		$\delta_{(X,F)}: I \stackrel{h_X}{\longrightarrow} 
		I_X \stackrel{\text{inclusion}}{\longrightarrow} 
		I_F \stackrel{h^{-1}_{F}}{\longrightarrow} I$, 
		$\delta_{(Y,F)}: I \stackrel{h_Y}{\longrightarrow} 
		I_Y \stackrel{\text{inclusion}}{\longrightarrow} 
		I_F \stackrel{h^{-1}_{F}}{\longrightarrow} I$.}
\end{figure}

\begin{defn}\rm \label{delta}
	Assume that the conditions of Lemma \ref{edge relation} hold.
	Then $I_X,I_Y \subseteq I_F$.
	Let 
	$$\delta_{(X,F)} = h^{-1}_{F} \circ h_X: I \to I,$$
	$$\delta_{(Y,F)} = h^{-1}_{F} \circ h_Y: I \to I.$$
\end{defn}

\begin{lm}
	Assume that the conditions of Lemma \ref{edge relation} hold.
	Then $\delta_{(X,F)}(I) \cap \delta_{(Y,F)}(I) = \emptyset$,
	$\delta_{(X,F)}(0) = 0$,
	$\delta_{(Y,F)}(1) = 1$.
\end{lm}
\begin{proof}
Follows from Lemma \ref{transform} directly
(cf. Figure \ref{transformation} (b)).
\end{proof}

Next,
we show that the maps
defined in Definition \ref{delta} are
$\pi_1$-equivariant:

\begin{lm}\label{equivariant gluing}
	Assume that the conditions of Lemma \ref{edge relation} hold.
	For each $g \in G$,
	we have
	$\delta_{(X,F)} = 
	\delta_{(t_g(X),t_g(F))}$ and
	$\delta_{(Y,F)} = 
	\delta_{(t_g(Y),t_g(F))}$.
\end{lm}

\begin{proof}
	We only show $\delta_{(X,F)} = 
	\delta_{(t_g(X),t_g(F))}$.
	By Fact \ref{transform}, 
	we have
	$$\delta_{(t_g(X),t_g(F))} = h^{-1}_{t_g(F)} \circ h_{t_g(X)} =
	(\rho_g \circ h_F)^{-1} \circ (\rho_g \circ h_X) = 
	h^{-1}_{F} \circ h_X = \delta_{(X,F)}.$$
\end{proof}

\subsection{Construction of the leaves}\label{subsection 4.4}

We have a decomposition of
$\widetilde{N(B)}$ to
$\coprod_{F \in Sec(\widetilde{B})} (F \times I)$
by cutting along
the union of interval fibers at the branch locus of $\widetilde{B}$.
We call each $F \times I$ 
($F \in Sec(\widetilde{B})$) a
\emph{product face} and call
each $F \times \{t\}$
($F \in Sec(\widetilde{B})$, $t \in I$)
a \emph{horizontal sheet}.
We assign a gluing to
$\coprod_{F \in Sec(\widetilde{B})} (F \times I)$ as follows:

\begin{construction}\rm\label{glue}
	For any edge $\textbf{e}$ of the branch locus of 
	$\widetilde{B}$ and
	branch sectors $F,X,Y$ of $\widetilde{B}$ which satisfy the conditions in
	Lemma \ref{edge relation},
	we glue $X \times I$, $Y \times I$ into
	$F \times I$ as follows:
	
	$\bullet$
	For our convenience,
	we only show the gluing between
	$X \times I$ and $F \times I$.
	Notice that there are two copies of $\textbf{e}$ in
	$F,X$ respectively.
	We denote by $\textbf{e}_X$ the copy of
	$\textbf{e}$ in $X$ and
	denote by $\textbf{e}_F$ the copy of
	$\textbf{e}$ in $F$
	(then $\textbf{e}_X \times I, \textbf{e}_F \times I$ are faces in $X \times I, F \times I$,
	respectively).
	Let
	$$\overline{\delta_{(X,F)}}: 
	\textbf{e}_X \times I \to \textbf{e}_F \times I$$
	be the map defined by
	$\overline{\delta_{(X,F)}}(r,t) = 
	(r,\delta_{(X,F)}(t))$
	for all
	$r \in \textbf{e}, t \in I$.
	We glue 
	$\textbf{e}_X \times I$
	into 
	$\textbf{e}_F \times I$
	through the map 
	$\overline{\delta_{(X,F)}}$ 
	(cf. Figure \ref{identification map}).
	As a result,
	for every $t \in I$,
	the segment
	$\textbf{e}_X \times \{t\} \subseteq \textbf{e}_X \times I$
	is homeomorphically glued with the segment
	$\textbf{e}_F \times \{\delta_{(X,F)}(t)\} \subseteq 
	\textbf{e}_F \times I$.
\end{construction}

\begin{figure}\label{identification map}
	\includegraphics[width=0.9\textwidth]{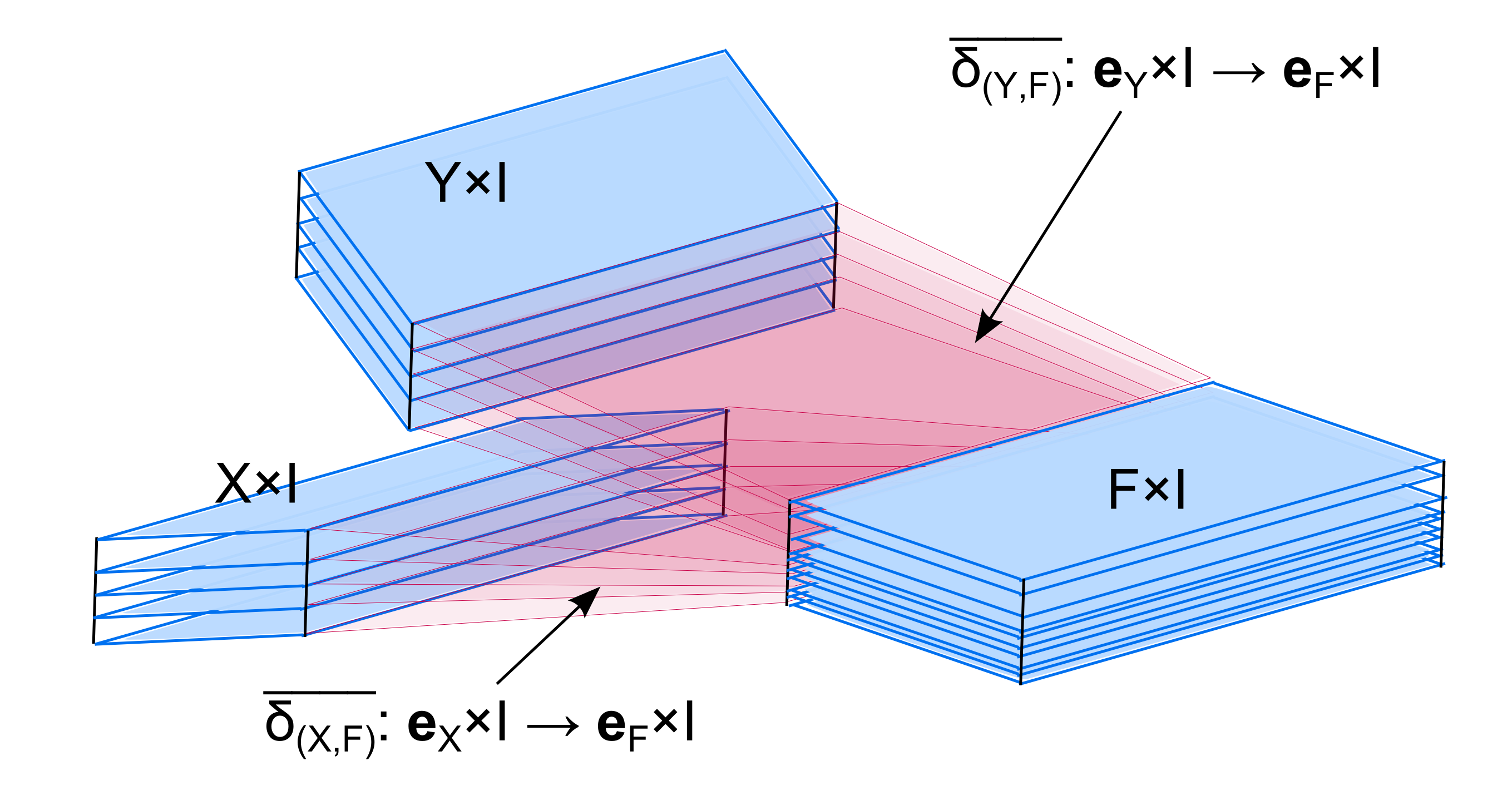}
	\caption{The gluing between
		$X \times I$, $Y \times I$ and
		$F \times I$.
		We glue $\textbf{e}_X \times I$, $\textbf{e}_Y \times I$ into
		$\textbf{e}_F \times I$
		and make
		$\{X \times \{t\}\}_{t \in I}$,
		$\{Y \times \{t\}\}_{t \in I}$
		attach to
		$\{F \times \{t\}\}_{t \in I}$.}
\end{figure}

\begin{defn}\rm\label{piece value}
	Let $$i_0: \coprod_{F \in Sec(\widetilde{B})} (F \times I) \to
	\mathbb{R}$$
	be the map such that:
	for each point $a \in F \times \{t\}$
	($F \in Sec(\widetilde{B}), t \in I$),
	we have
	$i_0(a) = h_F(t)$.
\end{defn}

\begin{fact}\rm\label{invariant}
	If two distinct points in
	$\coprod_{F \in Sec(\widetilde{B})} (F \times I)$ 
	are glued together by
	Construction \ref{glue},
	then their images under $i_0$ are equal.
\end{fact}
\begin{proof}
	Under the condition of Construction \ref{glue},
	we suppose that a segment $\textbf{e}_X \times \{u\} \subseteq
	X \times I$ ($u \in I$) is glued with
	a segment $\textbf{e}_F \times \{v\} \subseteq
	F \times I$ ($v \in I$).
	Then $v = \delta_{(X,F)}(u)$.
	By Definition \ref{delta},
	we have
	$h_X(u) = h_F(\delta_{(X,F)}(u)) = h_F(v)$.
	It follows that $i_0$ takes all points in 
	$\textbf{e}_X \times \{u\}$ and
	$\textbf{e}_F \times \{v\}$ to the same value in $\mathbb{R}$.
	Thus,
	any two points in
	$\coprod_{F \in Sec(\widetilde{B})} (F \times I)$ 
	are glued together only if they have the same image under $i_0$.
\end{proof}

Let $\mathcal{N}$ denote 
the resulting space obtained from gluing 
$\coprod_{F \in Sec(\widetilde{B})} (F \times I)$ as in
Construction \ref{glue}.
Notice that
the collection of horizontal sheets 
$\{F \times \{t\} \mid F \in Sec(\widetilde{B}), t \in I\}$ 
are attached edge-by-edge in the gluing.
So Construction \ref{glue} also induces
a gluing for
$\{F \times \{t\} \mid F \in Sec(\widetilde{B}), t \in I\}$.
Let $\mathcal{E}$ be
the resulting space obtained from the gluing for
$\{F \times \{t\} \mid F \in Sec(\widetilde{B}), t \in I\}$.
We have:

\begin{fact}\rm
	$\mathcal{N}$ is homeomorphic to $\widetilde{N(B)}$,
	and
	$\mathcal{E}$ is a foliation of $\mathcal{N}$.
\end{fact}
\begin{proof}
	Notice that the gluing
	$\overline{\delta_{(X,F)}}:
	\textbf{e}_X \times I \to \textbf{e}_F \times I$ 
	as given in Construction \ref{glue}
	homeomorphically embeds
	$\textbf{e}_X \times I$ into
	$\textbf{e}_F \times I$,
	and
	it glues the horizontal sheets edge-by-edge.
	Next,
	we focus on the gluings along the interval fibers at the
    double points of the branch locus of $\widetilde{B}$.
	
	Let $q$ be a double point of the branch locus of $\widetilde{B}$.
	We illustrate that
	Construction \ref{glue} gives compatible gluings between
	distinct copies of $\{q\} \times I$ in
	$\coprod_{F \in Sec(\widetilde{B})} (F \times I)$.
	Let $X,Y \in Sec(\widetilde{B})$ such that $q \in X, Y$.
	There are two distinct copies of $\{q\} \times I$ in
	$X \times I$, $Y \times I$ respectively.
	We denote them by 
	$\{q_X\} \times I \subseteq X \times I$ and
	$\{q_Y\} \times I \subseteq Y \times I$.
    By Fact \ref{invariant},
    if $a \in \{q_X\} \times I$,
    $b \in \{q_Y\} \times I$ are glued together,
    then $i_0(a)$ must be equal to $i_0(b)$.
    Notice that $i_0 \mid_{\{q_X\} \times I}, i_0 \mid_{\{q_Y\} \times I}$ are embeddings from
    $\{q_X\} \times I, \{q_Y\} \times I$ to $\mathbb{R}$.
    So the gluing between 
	$\{q_X\} \times I$ and
	$\{q_Y\} \times I$ is a homeomorphism between 
	their subintervals,
	and this gluing is compatible with other gluings between
	distinct copies of $\{q\} \times I$ in
	$\coprod_{F \in Sec(\widetilde{B})} (F \times I)$.
	
	Thus,
	for every point $t$ in an interval fiber at
	a double point,
	the horizontal sheets containing $t$ are glued horizontally at $t$.
	So
	$\mathcal{N}$ is homeomorphic to $\widetilde{N(B)}$,
	and
	$\mathcal{E}$ is a foliation of $\mathcal{N}$.
\end{proof}

We can regard $\mathcal{N}$ as $\widetilde{N(B)}$.
Combining Lemma \ref{equivariant gluing} and
Construction \ref{glue},
the gluing maps for
$\{F \times \{t\} \mid F \in Sec(\widetilde{B}), t \in I\}$ are 
$\pi_1$-equivariant.
So $\mathcal{E}$ is 
a $\pi_1$-equivariant foliation of
$\widetilde{N(B)}$.
Let $M_0 = N(B) \cong M - Int(B^{3})$.
The covering map $p: \widetilde{M} \to M$ descends $\mathcal{E}$ to
a foliation of $M_0$.
We denote it by $\mathcal{F}$.
For concreteness,
we denote $\mathcal{E}$ by $\widetilde{\mathcal{F}}$.
In the following,
we will always 
adopt the terminologies $\widetilde{N(B)}, \widetilde{\mathcal{F}}$
instead of $\mathcal{N}, \mathcal{E}$.

Note that
the leaves of $\mathcal{F}$ are tangent to $Int(\partial_h N(B))$ and 
transverse to $Int(\partial_v N(B))$.
In the following, 
we show that
$\mathcal{F}$ satisfies Theorem \ref{theorem 1} (1), (2), (3).

\begin{lm}\label{taut}
	(a)
	$\mathcal{F}$ contains no 
	torus leaf that bounds a solid torus
	(then $\mathcal{F}$ is Reebless).
	
	(b)
	There is a simple closed curve in $M$ that is co-orientably transverse to $\mathcal{F}$ and
	intersects every leaf of $\mathcal{F}$.
\end{lm}
\begin{proof}[The proof of (a)]
	Notice that every leaf of $\mathcal{F}$ is 
	carried by $B$.
	Since $B$ does not separate $M$,
	$B$ carries no torus leaf that
	bounds a solid torus.
\end{proof}
\begin{proof}[The proof of (b)]
	Let $q \in \Gamma - \widetilde{N(B)}$.
	We can choose a path $\tau$ in $\widetilde{M}$ such that:
	(1)
	$\tau$ starts at $q$,
	(2)
	for each face $F$ of $\Gamma$,
	$\tau$ meets at least one member of $\{t_g(F)\}_{g \in G}$,
	(3)
	if $\tau$ meets some $F \in Sec(\widetilde{B})$,
	then $\tau$ intersects $F$ transversely,
	and the direction of $\tau$ is consistent with the orientation on $F$
	(as a branch sector of $\widetilde{B}$),
	(4)
	$\tau$ ends at some point in $\{t_g(q)\}_{g \in G - \{1\}}$.
	We can isotope $\tau$ 
	(relative to its endpoints) so that
	$\tau$ 
	intersects $\widetilde{\mathcal{F}}$ transversely and
	$p(\tau)$ is a simple closed curve in $M$.
	Then $p(\tau)$ is the curve as required in (b).
\end{proof}

Next,
we show that 
$\mathcal{F}$ has a transverse $(\pi_1(M_0),\mathbb{R})$ structure.
Let $L$ be the leaf space of
$\widetilde{\mathcal{F}}$.
In the following,
we will not distinguish any point of 
$L$ with its corresponding leaf in
$\widetilde{\mathcal{F}}$.

Recall that we define the map
$i_0: \coprod_{F \in Sec(\widetilde{B})} (F \times I) \to
\mathbb{R}$ in Definition \ref{piece value}.

\begin{defn}\rm\label{strictly monotone map}
	Let $i_{des}: L \to \mathbb{R}$ be the map such that:
	for each $\widetilde{\lambda} \in L$,
	we choose an arbitrary point $v \in \widetilde{\lambda}$ and define
	$i_{des}(\widetilde{\lambda}) = i_0(v)$.
\end{defn}

By Fact \ref{invariant},
$i_{des}: L \to \mathbb{R}$ is well-defined.
And it's clear that $i_{des}$ is an immersion.

Next,
we show that $i_{des}$ descends the $\pi_1$-action on $L$ to
the action
$\{\rho_g: \mathbb{R} \to \mathbb{R} \mid g \in G\}$
(cf. Definition \ref{rho}).
Recall that the 
$\pi_1$-action on $L$ is induced from the 
deck transformations on $\widetilde{M}$.
For each $g \in G$, 
we still denote by
$t_g: L \to L$ the transformation on $L$
induced from the deck transformation
$t_g: \widetilde{M} \to \widetilde{M}$,
which
takes every leaf $\widetilde{\lambda} \in L$ to
$t_g(\widetilde{\lambda}) \in L$.

\begin{lm}\label{immersion}
	$i_{des}$ descends
	$\{t_g: L \to L \mid g \in G\}$ to
	$\{\rho_g: \mathbb{R} \to \mathbb{R} \mid g \in G\}$.
\end{lm}
\begin{proof}
	Let $\widetilde{\lambda} \in L$,
	$g \in G$.
	Suppose that $X \times \{s\}$ ($X \in Sec(\widetilde{B})$, $s \in I$) is a horizontal sheet contained in
	$\widetilde{\lambda}$.
	By Definition \ref{strictly monotone map} and
	Definition \ref{piece value},
	we have 
	$i_{des}(\widetilde{\lambda}) = 
	i_0(X \times \{s\}) = h_X(s)$.
	Combined with Fact \ref{transform},
	we have
	$$i_{des}(t_g(\widetilde{\lambda})) = 
	i_0(t_g(X) \times \{s\}) =
	h_{t_g(X)}(s) = \rho_g(h_X(s)) =
	\rho_g(i_{des}(\widetilde{\lambda})).$$
    It follows that
    $i_{des} \circ t_g = \rho_g \circ i_{des}$,
    i.e. the following diagram commutes:
    \begin{center}	
    	\begin{tikzcd}
    	L \arrow[r, "i_{des}"] \arrow[d, "t_g"']
    	& \mathbb{R} \arrow[d, "\rho_g"] \\
    	L \arrow[r, "i_{des}"]
    	& \mathbb{R}
    	\end{tikzcd}
    \end{center}
\end{proof}

We have shown that
$\mathcal{F}$ satisfies Theorem \ref{theorem 1} (1), (2).
It only remains to show
Theorem \ref{theorem 1} (3).

Let $V$ be a component of the vertical boundary of 
$\widetilde{N(B)}$
(then $V$ is an annulus).
Suppose that $\widetilde{\lambda}$ is a leaf of 
$\widetilde{\mathcal{F}}$ that meets $V$.
Notice that $i_{des}$ projects every transversal (with endpoints) of 
$\widetilde{\mathcal{F}}$ to a closed interval in $\mathbb{R}$.
Thus the two endpoints of 
any transversal of $\widetilde{\mathcal{F}}$ must be
contained in distinct leaves of $\widetilde{\mathcal{F}}$.
If $\widetilde{\lambda} \cap V$ is not a circle,
then there must be a transversal of $\widetilde{\mathcal{F}}$ in $V$ such that
its two endpoints are both contained in $\widetilde{\lambda} \cap V$.
This is a contradiction.
So $\widetilde{\lambda} \cap V$ is a circle.
Hence every transverse intersection component of some leaf of $\mathcal{F}$ and $\partial M_0$ is a circle.
So Theorem \ref{theorem 1} (3) holds. 

At last,
we explain that:
if a simple closed curve $\tau$ in $M$ is co-orientably transverse to $\mathcal{F}$ and
has nonempty intersection with some leaves of $\mathcal{F}$,
then $\tau$ is essential in $M$.

\begin{remark}\label{monotone}\rm
	Let $\tau: I \to M$ be a loop in $M$ such that: 
	(1)
	$\tau$ is positively transverse to $\mathcal{F}$,
	(2)
	$\tau(I)$ has nonempty intersection with
	some leaves of $\mathcal{F}$.
	Let $\widetilde{\tau}: I \to \widetilde{M}$ be a lift of $\tau$ in $\widetilde{M}$.
	Then $\widetilde{\tau}(I)$ meets at least two distinct leaves of $\widetilde{\mathcal{F}}$.
	We show $i_0(\widetilde{\tau}(1)) > i_0(\widetilde{\tau}(0))$ in the following.
	
	It's clear that $i_0(\widetilde{\tau}(t_1)) < i_0(\widetilde{\tau}(t_2))$ if
	$\widetilde{\tau}([t_1,t_2]) \subseteq \widetilde{N(B)}$
	(then $\widetilde{\tau}([t_1,t_2])$ is a positively oriented transversal of $\widetilde{\mathcal{F}}$).
	Now assume that $[t_1,t_2]$ is a subinterval of $I$ with
	$\widetilde{\tau}([t_1,t_2]) \cap \widetilde{N(B)} = \{\widetilde{\tau}(t_1),\widetilde{\tau}(t_2)\}$.
	We prove $i_0(\widetilde{\tau}(t_1)) < i_0(\widetilde{\tau}(t_2))$ as follows.
	Without loss of generality,
	we can assume $\widetilde{\tau}([t_1,t_2]) \subseteq \Gamma$.
	Then there is a negative face $E$ of $\Gamma$ and 
	a positive face $F$ of $\Gamma$ such that
	$\widetilde{\tau}(t_1) \in E\times \{1\}$,
	$\widetilde{\tau}(t_2) \in F \times \{0\}$.
	By Definition \ref{I_F}, Definition \ref{psi} and Definition \ref{piece value}, 
	we have
	$i_0(E\times \{1\}) = h_E(1) = N_0 < N_1 = h_F(0) < i_0(F \times \{0\})$.
	It follows that 
	$i_0(\widetilde{\tau}(t_1)) < i_0(\widetilde{\tau}(t_2))$.
	
	Thus 
	$i_0(\widetilde{\tau}(1)) > i_0(\widetilde{\tau}(0))$,
	and therefore
	$\widetilde{\tau}(0), \widetilde{\tau}(1)$ are distinct,
	which implies that
	$\tau(I)$ is an essential simple closed curve in $M$.
\end{remark}

\subsection{Testing if $\mathcal{F}$ can extend to
a taut foliation of $M$}\label{subsection 4.6}

We already have a foliation $\mathcal{F}$ of $M_0 = N(B)$.
Now we consider: 
when can $\mathcal{F}$ extend to a taut foliation of $M$?

\begin{prop}\label{extendability}
	$\mathcal{F}$ can extend to 
	a taut foliation in $M$
	if and only if
	either of the following equivalent conditions holds:
	
	(1)
	$\partial_v N(B)$ is connected
	(then it is an annulus).
	
	(2)
	The union of 
	positive faces of $\Gamma$ is a single $2$-disk.
	Then the union of 
	negative faces of $\Gamma$ is also a single $2$-disk
	(see Figure \ref{extending}).
\end{prop}

\begin{figure}\label{extending}
	\includegraphics[width=0.6\textwidth]{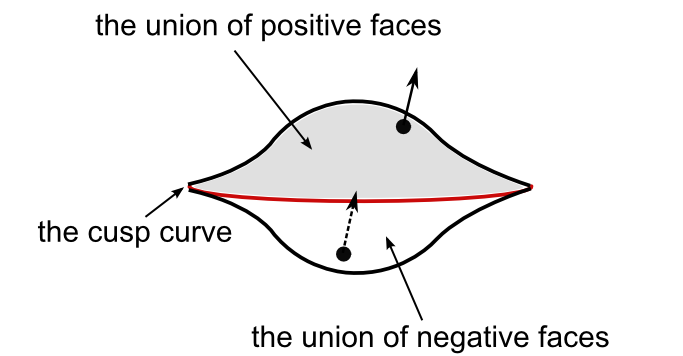}
	\caption{The union of positive faces/negative faces,
		and the cusp curve on $\Gamma$.}
\end{figure}

\begin{proof}
	We prove (1) at first.
	Let $\mathcal{G} = M - Int(N(B))$.
	Then $\mathcal{G}$ is a compact $3$-ball,
	and each component of 
	$\partial_v N(B)$ is an annulus that lies on $\partial \mathcal{G}$.
	It's clear that
	$\mathcal{F}$ can extend to a foliation of $M$ if and only if
	$\partial_v N(B)$ is a connected annulus 
	(then the extending foliation is obtained by
	filling $\mathcal{G}$ with horizontal disks).
	Now suppose that $\mathcal{F}$ can extend to a foliation in $M$.
	Similar to Lemma \ref{taut},
	we can choose a simple closed curve 
	transverse to the extending foliation that
	meets every leaf.
	Hence the extending foliation is taut.

	Now we show (1)$\iff$(2).	
	Let $\widetilde{\mathcal{G}}$ be the component of
	$p^{-1}(\mathcal{G})$ such that
	$\widetilde{\mathcal{G}} \subseteq \Gamma$.
	Then the map $\widetilde{\pi}: \widetilde{N(B)} \to \widetilde{B}$ collapses
    $\partial_v \widetilde{N(B)} \cap \partial \widetilde{\mathcal{G}}$ into
    the cusp curves on $\partial \Gamma$.
    Thus
    $\partial_v N(B)$ is connected if and only if
    there is exactly one cusp curve on $\partial \Gamma$.
    Recall from Definition \ref{orienting},
    every positive face (resp. negative face) of $\Gamma$ has 
    positive orientation
    (resp. negative orientation) with respect to $\Gamma$,
    as a branch sector of $\widetilde{B}$.
    Thus
    $$\bigcup_{l \text{ is a cusp curve on } \partial \Gamma} l = 
    \partial(\bigcup_{F \text{ is a positive face of } \Gamma} F),$$
    i.e.
    the union of cusp curves on $\partial \Gamma$ is the boundary of
    the union of positive faces of $\Gamma$.
    It follows that
    (1)$\iff$(2).
\end{proof}

\begin{lm}\label{induced transverse structure}
	Suppose that $\mathcal{F}$ can extend to a taut foliation
	$\mathcal{F}_1$ of $M$.
	Then $\mathcal{F}_1$ has a transverse $(\pi_1(M),\mathbb{R})$ structure.
\end{lm}
\begin{proof}
	Induced from the map $i_{des}: L \to \mathbb{R}$ 
	(cf. Lemma \ref{immersion}) directly.
\end{proof}

As in Definition \ref{order-domain pair}, 
$(<,\Gamma)$ is an order-domain pair of $M$.
We call $\mathcal{F}$ a \emph{resulting foliation} of $(<,\Gamma)$.
We complete the proof of Theorem \ref{extendable} in the remainder of this subsection:

\begin{extendable}
	(a)
	$\mathcal{F}$ is uniquely determined by $(<,\Gamma)$, 
	up to blowing-up/down.
	
	(b)
	$\mathcal{F}$ can extend to a taut foliation of $M$ if and only if
	the union of positive faces of $\Gamma$ is a single $2$-disk.
	Moreover,
	if $\mathcal{F}$ can extend to a taut foliation $\mathcal{F}_1$ of $M$,
	then $\mathcal{F}_1$ has a transverse $(\pi_1(M),\mathbb{R})$ structure.
	
	(c)
	$\mathcal{F}$ can extend to an $\mathbb{R}$-covered foliation of $M$ if and only if 
	the union of positive faces of $\Gamma$ is a single $2$-disk,
	and both of
	$\bigcup_{g \in G, g < 1}t_g(\Gamma)$,
	$\bigcup_{g \in G, g \geqslant 1}t_g(\Gamma)$ are connected.
\end{extendable}

To complete Theorem \ref{extendable},
it remains to discuss the following:
(1)
the case where $\mathcal{F}$ can extend to an $\mathbb{R}$-covered foliation of $M$
(cf. Remark \ref{R-covered}),
(2)
$\mathcal{F}$ is uniquely determined by $(<,\Gamma)$,
up to blowing-up/down (cf. Remark \ref{determined}).

\begin{remark}\rm\label{R-covered}
	Suppose that $\mathcal{F}$ can extend to a taut foliation $\mathcal{F}_1$ in $M$.
	$i_{des}$ takes some leaves of $\widetilde{\mathcal{F}}$ to
	the point $N_0 \in \mathbb{R}$ as chosen in Definition \ref{rho}.
	Notice that the $\mathcal{F}_1$ is $\mathbb{R}$-covered if and only if 
	$i^{-1}_{des}(N_0)$ is a single leaf of $\widetilde{\mathcal{F}}$.
	Through the proof of Lemma \ref{edge relation},
	we can observe that
	$\widetilde{\pi}: \widetilde{N(B)} \to \widetilde{B}$ takes
	$i^{-1}_{des}(N_0)$ to $\partial (\bigcup_{g \in G, g < 1} t_g(\Gamma))$.
	Thus,
	$i^{-1}_{des}(N_0)$ is a single leaf if and only if
	$\partial (\bigcup_{g \in G, g < 1} t_g(\Gamma))$ is connected,
	i.e. 
	both of
	$\bigcup_{g \in G, g < 1}t_g(\Gamma)$,
	$\bigcup_{g \in G, g \geqslant 1}t_g(\Gamma)$ are connected.
	Therefore,
	$\mathcal{F}$ can extend to an $\mathbb{R}$-covered foliation of $M$ if and only if 
	Proposition \ref{extendability} holds and
	both of
	$\bigcup_{g \in G, g < 1}t_g(\Gamma)$,
	$\bigcup_{g \in G, g \geqslant 1}t_g(\Gamma)$ are connected.
\end{remark}

\begin{remark}\rm\label{determined}
	In our construction,
	$\mathcal{F}$ only depends on the following information:
	
	(1)
	The branched surface $B$ as given in Subsection \ref{subsection 3.2}.
	
	(2)
	The action $\{\rho_g: \mathbb{R} \to \mathbb{R} \mid g \in G\}$ and
	the closed interval $[N_0,N_1] \subseteq \mathbb{R}$ as given in 
	Definition \ref{rho},
	which have following property:
	for any $g,h \in G$ with $g < h$ and $i,j \in \{0,1\}$,
	$\rho_g(N_i) < \rho_h(N_j)$.
	
	Notice that $B$ is homeomorphic to $p(\partial \Gamma)$ and
	has a co-orientation on its branch sectors which is determined by``$<$''.
	Next, we explain that $\{\rho_g: \mathbb{R} \to \mathbb{R} \mid g \in G\}$ 
	is determined by ``$<$'', up to blowing-up/down
	and conjugation by $Homeo_+(\mathbb{R})$.
	
	Let $\mathcal{U}$ be the set of components of $\mathbb{R} - \bigcup_{g \in G}\rho_g([N_0,N_1])$ 
	of the forms $[a,b)$, $(a,b]$, $[a,b]$.
	Notice that all elements of $\mathcal{U}$ have disjoint closure,
	and $\mathcal{U}$ is a countable set
	(since $\mathbb{R}$ contains at most countably many disjoint intervals).
	We blow-down $\bigcup_{J \in \mathcal{U}} \overline{J}$ in $\mathbb{R}$.
	Notice that $\mathcal{U}$ is invariant under the action
	$\{\rho_g \mid g \in G\}$,
	so this blowing-down induces a new action from $\{\rho_g \mid g \in G\}$.
	We denote the induced action by
	$\{\rho^{*}_{g} \mid g \in G\}$.
	We claim that $\{\rho^{*}_{g} \mid g \in G\}$ is uniquely determined by ``$<$'',
	up to conjugacy in $Homeo_+(\mathbb{R})$.
	To see this,
	we describe $\mathbb{R} - \bigcup_{g \in G}\rho^{*}_{g}([N_0,N_1])$ as follows.
	
	Let $A = \mathbb{R} - \bigcup_{g \in G}\rho^{*}_{g}([N_0,N_1])$.
	Let $\Omega$ be the set of $C \subseteq G$ ($C \ne \emptyset, G$) such that:
	if $h \in C$, then $g \in C$ for all $g \in G$ with $g < h$.
	Then there is a well-defined map $f: A \to \Omega$ such that
	$f(t) = \{g \in G \mid \rho^{*}_{g}(N_0) < t\}$ for all $t \in A$.
	For arbitrary $C \in \Omega$,
	we have the following observation:
	
	(1)
	Assume that both of $\max C, \min (G-C)$ exist.
	Then $f^{-1}(C)$ is an open interval.
	
	(2)
	Assume that exactly one of $\max C, \min (G-C)$ exists.
	Then $f^{-1}(C)$ does not exist
	(otherwise, $f^{-1}(C)$ is an interval of the form either $[a,b)$ or $(a,b]$).
	
	(3)
	Assume that both of $\max C, \min (G-C)$ do not exist.
	Then $f^{-1}(C)$ is a single point
	(otherwise, $f^{-1}(C)$ is an interval of the form $[a,b]$).
	
	By the above discussions,
	$A$ is determined by $\Omega$ (and therefore is determined by ``$<$''),
	up to $Homeo_+(\mathbb{R})$.
	Thus,
	$\{\rho^{*}_{g} \mid g \in G\}$ is uniquely determined by ``$<$'',
	up to conjugacy in $Homeo_+(\mathbb{R})$.
	$\{\rho^{*}_{g} \mid g \in G\}$ produces a new resulting foliation (denoted $\mathcal{F}^{*}$) of $(<,\Gamma)$
	through the process in Subsection \ref{subsection 4.4},
	which is uniquely determined by $(<,\Gamma)$.
	
	At last,
	we consider the change between $\mathcal{F}^{*}$ and 
	the original resulting foliation $\mathcal{F}$.
	Recall that we blow-down the set
	$\bigcup_{J \in \mathcal{U}} \overline{J}$ in $\mathbb{R}$ to make
	$\{\rho_g \mid g \in G\}$ be $\{\rho^{*}_{g} \mid g \in G\}$ and
	make $\mathcal{F}$ be $\mathcal{F}^{*}$.
	For each $J \in \mathcal{U}$,
	we have
	$\rho_g(N_i) \notin J$ for all $g \in G$ and $i \in \{0,1\}$,
	and thus $i^{-1}_{des}(J) \cap \partial_h \widetilde{N(B)} = \emptyset$.
	So $i^{-1}_{des}(\overline{J})$
	is a union of pockets of leaves in $\widetilde{\mathcal{F}}$
	(which is necessarily a countable union).
	So the above blowings-down in $\mathbb{R}$ 
	make $\widetilde{\mathcal{F}}$ be blown-down in $\widetilde{M}$
	in a $\pi_1$-equivariant way,
	and therefore make $\mathcal{F}$ be blown-down in $M$.
	As a result,
	we can blow-down $\mathcal{F}$ countably many times to
	obtain $\mathcal{F}^{*}$.
	
	Thus,
	all resulting foliations of $(<,\Gamma)$ are same up to blowing-up/down.
\end{remark}

Combining Proposition \ref{extendability},
Lemma \ref{induced transverse structure},
Remark \ref{R-covered},
Remark \ref{determined},
we can complete
the proof of Proposition \ref{extendable}.

\begin{figure}\label{surface case}
	\centering
	\subfigure[]{
	\includegraphics[width=0.33\textwidth]{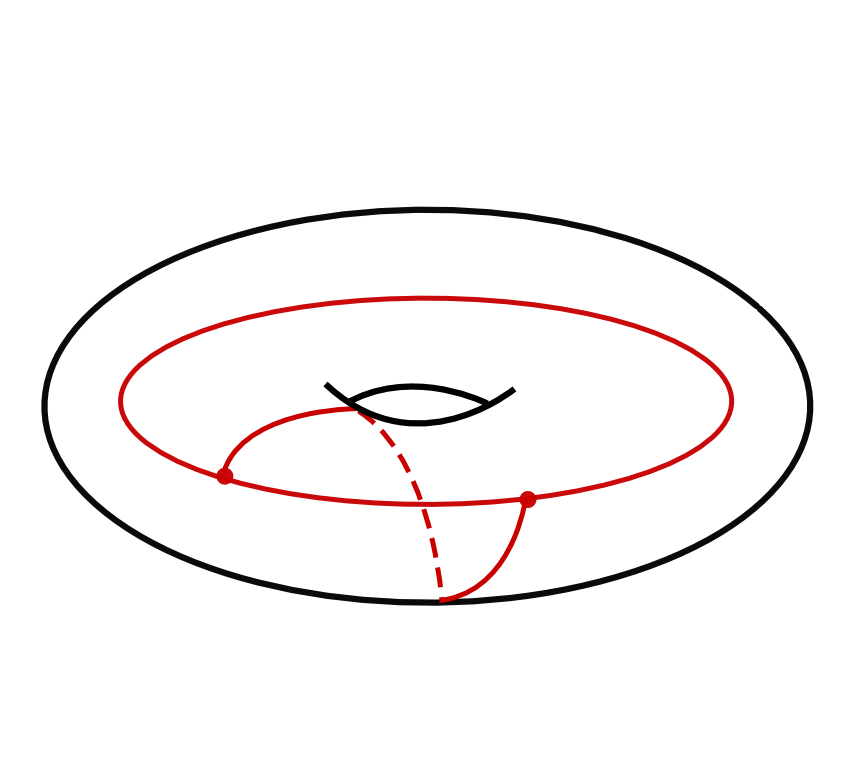}}
	\subfigure[]{
	\includegraphics[width=0.33\textwidth]{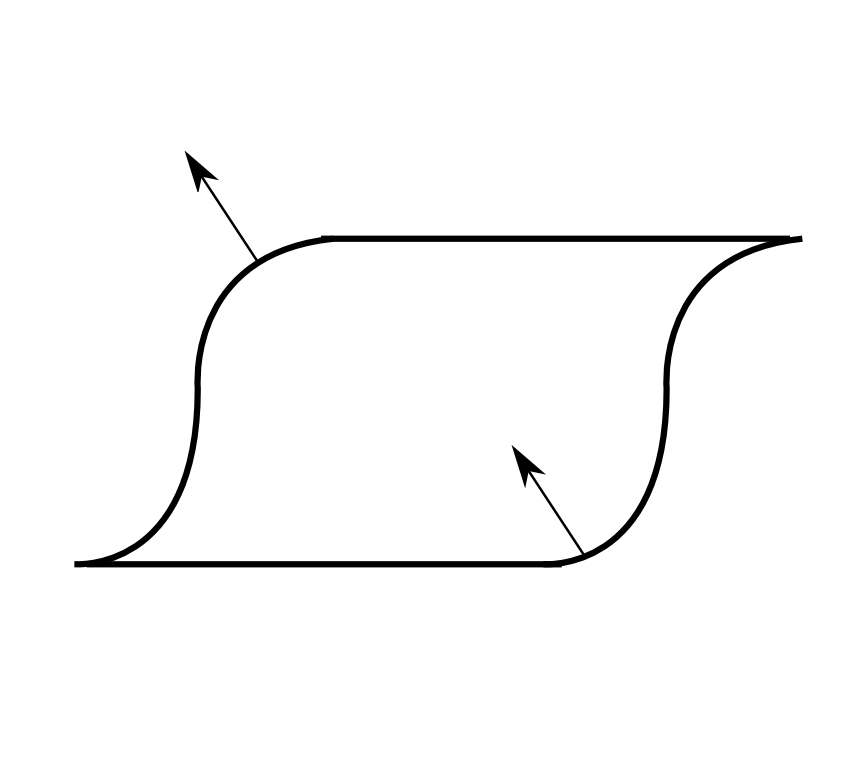}}
	\centering
	\subfigure[]{
	\includegraphics[width=0.33\textwidth]{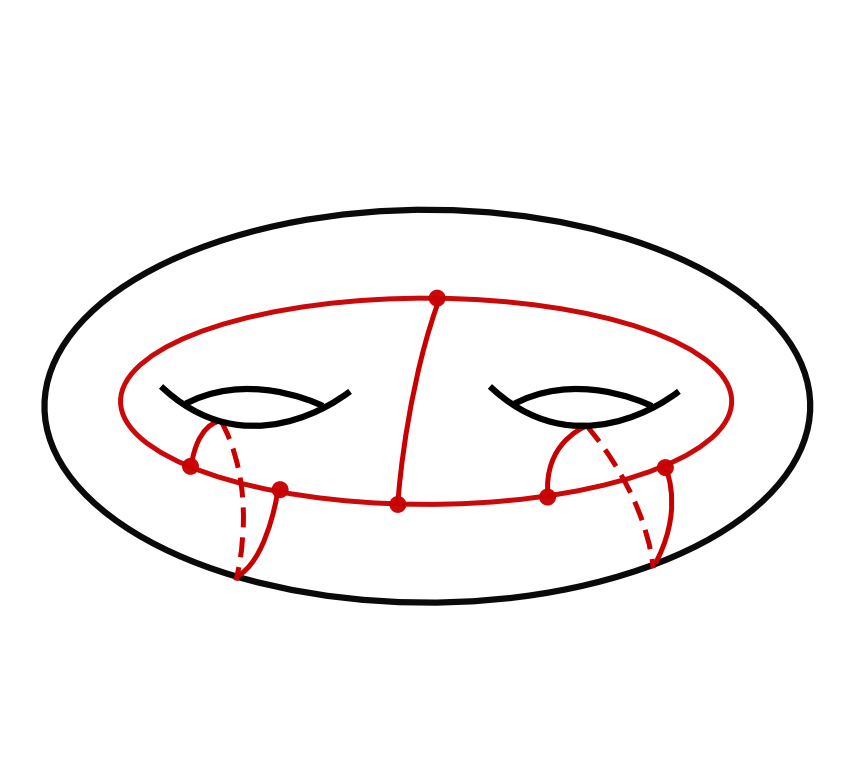}}
	\subfigure[]{
	\includegraphics[width=0.33\textwidth]{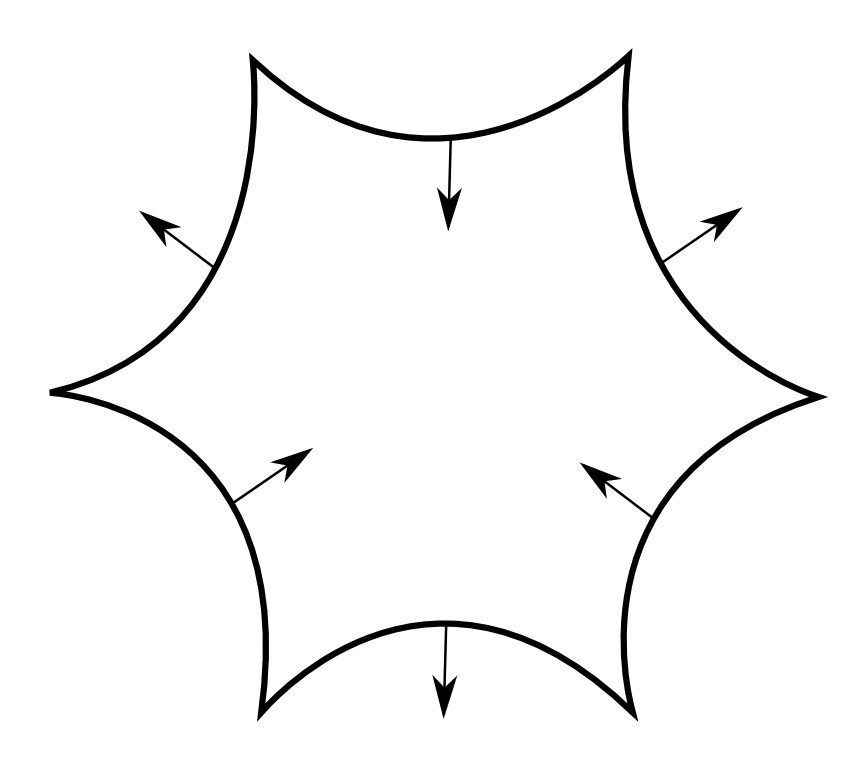}}
	\caption{(a), (b): Suppose that $S$ is a torus.
		Then the trivalent graph $p(\partial \Gamma)$ has 
		two $3$-valent vertices exactly.
		We deform $p(\partial \Gamma)$ to 
		the train track $\tau$,
		then the boundary of
		$S \setminus \setminus \tau$ has two cusps.
		(c), (d):
		Suppose that $S$ is a closed orientable surface $S$ of genus $2$.
		Then the trivalent graph $p(\partial \Gamma)$ has 
		six $3$-valent vertices.
		Thus $\tau$ has six cusps,
		and the boundary of
		$S \setminus \setminus \tau$ has six cusps.}
\end{figure}

\subsection{The $2$-dimensional case}\label{subsection 4.7}

Let $S$ be a closed orientable surface of genus $\geqslant 1$.
Then $\pi_1(S)$ is left orderable.
Let $p: \widetilde{S} \to S$
be the universal cover of $S$.
Fix an arbitrary left-invariant order ``$<$'' of $\pi_1(S)$.
We choose a fundamental domain $\Gamma$
of $S$ such that $p(\partial \Gamma)$
is a trivalent graph.
Similar to Subsection \ref{subsection 3.2},
we can orient the trivalent graph $p(\partial \Gamma)$ to a train track $\tau$
through the order ``$<$''.
Let $D$ be a compact $2$-disk on $S$.
Through the process in
Subsection \ref{subsection 4.1}$\sim$\ref{subsection 4.4},
we can construct an one-dimensional foliation $\mathcal{F}$ of 
$S - Int(D)$ that 
has a transverse $(\pi_1(S),\mathbb{R})$ structure.

Next, we consider the following two cases:

(a)
Suppose that $S$ is a torus.
As shown in Figure \ref{surface case} (a),
$p(\partial \Gamma)$ has two $3$-valent vertices.
We orient $p(\partial \Gamma)$ to a train track $\tau$,
then each $3$-valent vertex of $p(\partial \Gamma)$ produces
exactly one cusp of $\tau$.
So $\tau$ has two cusps,
and
$S \setminus \setminus \tau$ is a disk with two cusps on the boundary (cf. Figure \ref{surface case} (b)).
Similar to Proposition \ref{extendability},
we can extend $\mathcal{F}$ to a foliation of $S$ by 
filling $D$ with
horizontal segments.

(b)
Suppose that $S$ is a closed orientable surface
of genus $g \geqslant 2$.
There are $4g-2$ vertices of $p(\partial \Gamma)$ that have valency $3$.
Thus $\tau$ has $4g-2 \geqslant 6$ cusps,
and
$S \setminus \setminus \tau$ is a disk with at least $6$ cusps on the boundary.
So $\mathcal{F}$ can not extend to a foliation of $S$.
On the other hand,
we can ensure that
$S$ admits no foliation since $\chi(S) \ne 0$.
We can contract $D$ to a single point and
make $\mathcal{F}$ be deformed to 
a co-orientable singlular foliation on $S$ with 
a single $(4g-2)$-pronged singular point.
	
\section{$\mathbb{R}$-covered foliations constructed in our process}\label{section 5}

We prove Theorem \ref{process} in this section.
To begin with,
we review some backgrounds.
A taut foliation has \emph{hyperbolic leaves} if
there is a continuously varying leafwise metric on its leaves so that
every leaf is locally isometric to $\mathbb{H}^{2}$
(cf. \hyperref[Can]{[Can]}).
The following theorem is shown by Candel in \hyperref[Can]{[Can]}:

\begin{thm}[Candel]
	Every taut foliation of an irreducible atoroidal $3$-manifold has hyperbolic leaves.
\end{thm}

Boyer and Hu prove that taut foliations in rational homology $3$-spheres also have hyperbolic leaves
(cf. \hyperref[BH]{[BH]}):

\begin{thm}[Boyer-Hu]
	Every taut foliation of a rational homology $3$-sphere has hyperbolic leaves.
\end{thm}

\begin{defn}\rm
	Let $\mathcal{F}$ be an $\mathbb{R}$-covered foliation of a closed $3$-manifold $N$.
	Let $\widetilde{\mathcal{F}}$ be
	the pull-back foliation of $\mathcal{F}$ in the universal cover of $N$.
	Let $X$ be a vector field transverse to $\mathcal{F}$ and
	$\widetilde{X}$ the pull-back vector field of $X$ in the universal cover of $N$.
	$X$ is called 
	\emph{regulating} if every orbit of 
	$\widetilde{X}$ intersects every leaf of $\widetilde{\mathcal{F}}$ exactly once. 
\end{defn}

In \hyperref[Cal2]{[Cal2]},
Calegari proves the following theorem:

\begin{thm}[Calegari]\label{hyperbolic leaf}
	Every $\mathbb{R}$-covered foliation with hyperbolic leaves admits
	a regulating transverse vector field.
\end{thm}

It follows that

\begin{cor}\label{regulating}
	Let $N$ be either an irreducible atoroidal $3$-manifold or
	a rational homology $3$-sphere.
	Then every $\mathbb{R}$-covered foliation of $N$ admits
	a regulating transverse vector field.
\end{cor}

Now we begin to prove Theorem \ref{process}.
Let $M$ be the closed irreducible $3$-manifold
as given in Notation \ref{M}.
We assume further that
$M$ is either an atoroidal $3$-manifold or a rational homology $3$-sphere,
and
$M$ admits an $\mathbb{R}$-covered foliation $\mathcal{F}_0$.
For the reader's convenience,
we restate Theorem \ref{process} as follows:

\begin{process}
	There is a resulting foliation $\mathcal{F}$ of $M - Int(B^{3})$ obtained from
	the process of Theorem \ref{theorem 1} such that:
	
	(1)
	$\mathcal{F}$ can extend to 
	an $\mathbb{R}$-covered foliation $\mathcal{F}_{extend}$ of $M$.
	
	(2)
	$\mathcal{F}_0$ can be recovered from doing a collapsing operation on $\mathcal{F}_{extend}$.
\end{process}

This section is organized as follows:

\begin{sketch}\rm    
	(a)
	In Subsection \ref{subsection 5.1},
	we show that
	there exists an $\mathbb{R}$-covered foliation $\mathcal{F}_1$ of $M$ such that
	(1)
	$\mathcal{F}_0$ can be obtained from doing 
	a collapsing operation for $\mathcal{F}_1$,
	(2)
	$\mathcal{F}_1$ has a $2$-plane leaf $\lambda$.
	
	(b)
	In Subsection \ref{subsection 5.2},
	we blow-up the $2$-plane leaf $\lambda$ of $\mathcal{F}_1$ to
	obtain an $\mathbb{R}$-covered foliation $\mathcal{F}_2$.
	We choose an order-domain pair $(<,\Gamma)$ of $M$ and
	show that: 
	there is a resulting foliation $\mathcal{F}$ of $(<,\Gamma)$
	in $M - Int(B^{3})$ that
	can extend to $\mathcal{F}_2$.
    The combination 
    $\mathcal{F} \rightsquigarrow \mathcal{F}_2 \rightsquigarrow
    \mathcal{F}_1 \rightsquigarrow \mathcal{F}_0$ is
    the process as required in Theorem \ref{process}.
\end{sketch}

Throughout this section,
we adopt the following notations:

\begin{notation}\rm\label{notation}
Let $\widetilde{l}$ be a leaf of a $\pi_1$-equivariant foliation of $\widetilde{M}$.
We will always denote by
$Stab_G(\widetilde{l}) = \{g \in G \mid t_g(\widetilde{l}) = \widetilde{l}\}$
the stabilizer subgroup of $G$
with respect to $\widetilde{l}$.
\end{notation}

\subsection{Perturbing a pocket through an action}\label{subsection 5.1}

To begin with,
we review the existence of fundamental domains for
arbitrary surface in its universal cover:

\begin{lm}
	Let $S$ be an orientable surface
	($S$ may be a surface of infinite-type).
	Let $\widetilde{S}$ be the universal cover of $S$.
	There is a (possibly non-compact) fundamental domain
	$\Omega$ of $S$ in $\widetilde{S}$ such that
	every point in the boundary of
	$\Omega$ is contained in finitely many members of
	$$\{g(\Omega) \mid g \text{ is a deck transformation of } \widetilde{S}\}.$$
\end{lm}
\begin{proof}
	This follows from \hyperref[Ri]{[Ri]} and \hyperref[A]{[A]}.
\end{proof}

Let $\widetilde{\mathcal{F}_0}$ be the pull-back foliation of $\mathcal{F}_0$ in $\widetilde{M}$.
Let $l$ be a leaf of $\mathcal{F}_0$.
Let
$\widetilde{l}$ be a leaf of $\widetilde{\mathcal{F}_0}$ such that $p(\widetilde{l}) = l$.
By Novikov's Theorem (\hyperref[N]{[N]}, \hyperref[Ro]{[Ro]}),
$\widetilde{l}$ is a universal cover of $l$
(where the deck transformations on $\widetilde{l}$ are induced from
the deck transformations of $\widetilde{M}$ which
preserve $\widetilde{l}$).
So there exists a fundamental domain of 
$l$ in $\widetilde{l}$ as above.

\begin{defn}\rm\label{fundamental domain}
	Let
	$K = Stab_G(\widetilde{l}) \subseteq G$.
	Then
	$K$ acts on $\widetilde{l}$ by
	the restriction of 
	$\{t_g \mid g \in K\}$ to $\widetilde{l}$.
	We choose a (possibly non-compact) fundamental domain $\Sigma$ for
	this action on $\widetilde{l}$ such that
	every point in the boundary of
	$\Sigma$ is contained in finitely many members of
	$\{t_g(\Sigma) \mid g \in K\}$.
\end{defn}

\begin{defn}\rm\label{blowing-up}
	We blow-up the leaf $l$ of $\mathcal{F}_0$ to obtain
	a foliation $\mathcal{F}^{blow}_{0}$ of $M$.
	Let $\widetilde{\mathcal{F}^{blow}_{0}}$ be its pull-back foliation in $\widetilde{M}$.
	Then the leaf $l$ of $\mathcal{F}_0$ is replaced by a blown-up pocket
	$l \times I$ in $\mathcal{F}^{blow}_{0}$,
	and the leaf $\widetilde{l}$ of $\widetilde{\mathcal{F}_0}$ is replaced by
	a blown-up pocket $\widetilde{l} \times I$ in $\widetilde{\mathcal{F}^{blow}_{0}}$.
	We can regard $\widetilde{l} \times I$ as the universal cover of $l \times I$.
	As in Definition \ref{fundamental domain},
	$\widetilde{l} \times I$ can be decomposed to
	$\{t_g(\Sigma) \times I \mid g \in K\}$.
	Here,
	$t_g(\Sigma)$ denotes a region in the fibered surface $\widetilde{l}$ of $\widetilde{l} \times I$.
\end{defn}

We fix an effective action $\{s_g: I \to I \mid g \in G\}$ of $G$ on $I$
such that 
$s_g(\frac{1}{2}) \ne \frac{1}{2}$ for every $g \in G - \{1\}$.
Now we glue the copies of
$\{t_g(\Sigma) \times I \mid g \in K\}$ to make the sheets
$\{t_g(\Sigma) \times \{t\} \mid g \in K, t \in I\}$
be glued to a foliation in
$\widetilde{l} \times I$:

\begin{construction}\rm\label{F_1}
	We decompose 
	$\widetilde{l} \times I$ to
	$\{t_g(\Sigma) \times I \mid g \in K\}$.
	Now we consider arbitrary distinct elements $r,h \in K$ with
	$t_r(\Sigma) \cap 
	t_h(\Sigma) \ne \emptyset$.
	Let $e$ denote $t_r(\Sigma) \cap t_h(\Sigma)$,
	and let
	$e \times I$ denote $(t_r(\Sigma) \times I) \cap (t_h(\Sigma) \times I)$.
	Let
	$\delta_{(r,h)}: e \times I \to e \times I$
	be the map defined by 
	$$\delta_{(r,h)}(u,v) = (u,s_{h^{-1}r}(v)),
	\forall u \in e, v \in I.$$
	We glue the copy of
	$e \times I$ in
	$t_r(\Sigma) \times I$
	to the copy of
	$e \times I$ in
	$t_h(\Sigma) \times I$
	through the map
	$\delta_{(r,h)}$
	(cf. Figure \ref{holonomy}).
	Now for each $t \in I$,
	the segment
	$e \times \{t\} \subseteq 
	t_r(\Sigma) \times \{t\}$
	is glued with the segment
	$e \times 
	\{s_{h^{-1}r}(t)\} \subseteq 
	t_h(\Sigma) \times \{s_{h^{-1}r}(t)\}$.
\end{construction}

We claim that:
for arbitrary three distinct members in
$\{t_g(\Sigma) \times I \mid g \in K\}$ that have nonempty common-intersection,
all the gluing maps we used above for them are 
compatible in their common-intersection.
Now suppose that $v$ is a point in the fibered surface $\widetilde{l}$ of $\widetilde{l} \times I$ such that
$v \in$ $t_r(\Sigma)$, $t_h(\Sigma)$, $t_j(\Sigma)$ for distinct $r,h,j \in K$.
By Construction \ref{F_1},
for each $t \in I$,
the gluing maps between
$t_r(\Sigma) \times I$,
$t_h(\Sigma) \times I$,
$t_j(\Sigma) \times I$ glue
the three points 
$(v,t) \in t_r(\Sigma) \times I$,
$(v,s_{h^{-1}r}(t)) \in t_r(\Sigma) \times I$,
$(v,s_{j^{-1}r}(t)) \in t_r(\Sigma) \times I$ together
(cf. Figure \ref{holonomy}).
So
these gluing maps are compatible in
$v \times I$.

\begin{figure}\label{holonomy}
	\includegraphics[width=0.6\textwidth]{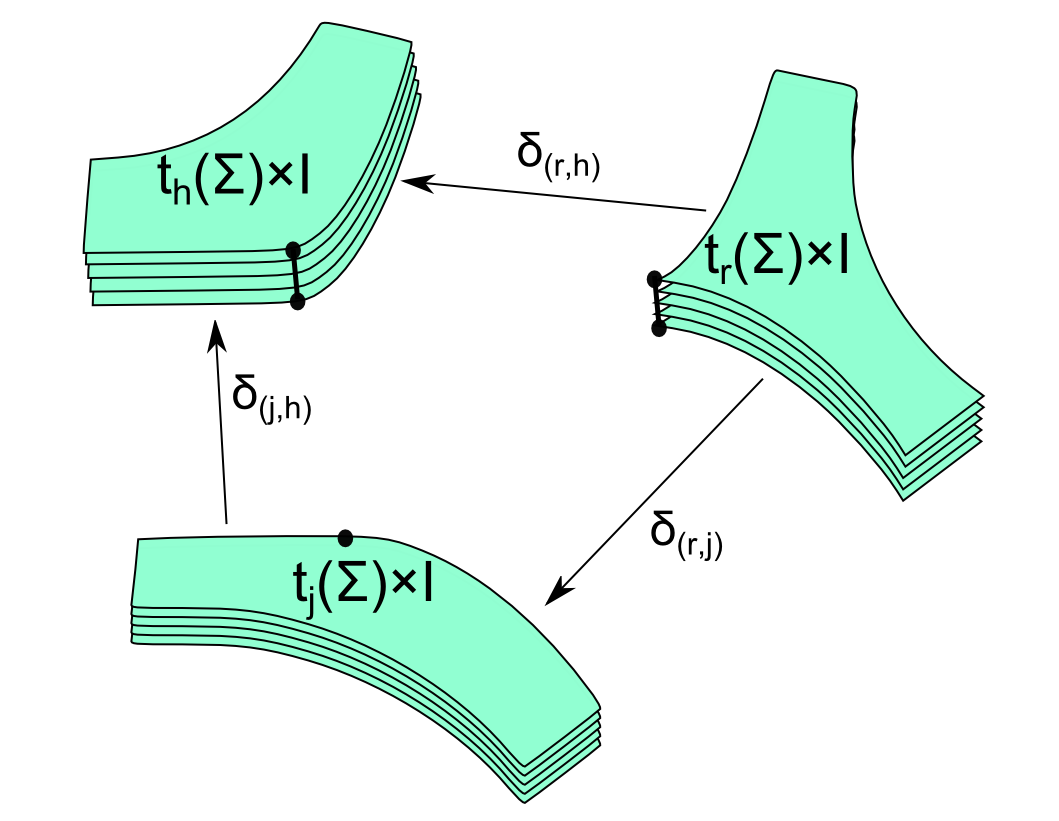}
	\caption{Given $r,h,j \in K$ such that the three regions
		$t_r(\Sigma)$,
		$t_h(\Sigma)$,
		$t_j(\Sigma)$ in the fibered surface $\widetilde{l}$ of $\widetilde{l} \times I$ have a common point,
		then for every $t \in I$,
		the three horizontal sheets
		$t_r(\Sigma) \times \{t\}$,
		$t_h(\Sigma) \times \{s_{h^{-1}r}(t)\}$,
		$t_j(\Sigma) \times \{s_{j^{-1}r}(t)\}$
		are attached.}
\end{figure}

We denote by $t^{*}_{g}: \widetilde{l} \times I \to \widetilde{l} \times I$ ($g \in K$) 
the deck transformation of $\widetilde{l} \times I$ which
takes $\Sigma \times I$ to $t_g(\Sigma) \times I$.
And we call $t_g(\Sigma) \times \{t\}$
a \emph{horizontal sheet} for all $g \in K$, $t \in I$.
In the following,
we show that all horizontal sheets are glued to
a foliation of $\widetilde{l} \times I$ which is equivariant under 
$\{t^{*}_{g}: \widetilde{l} \times I \to \widetilde{l} \times I \mid g \in K\}$:

\begin{lm}\label{leaf}
	$\{t_g(\Sigma) \times \{t\} \mid g \in K, t \in I\}$ is glued to a foliation
	(denoted $\widetilde{\mathcal{F}^{pert}}$) of $\widetilde{l} \times I$.
	For every $t \in I$,
	$\bigcup_{g \in K}(t_g(\Sigma) \times \{s^{-1}_{g}(t)\})$ is a leaf of $\widetilde{\mathcal{F}^{pert}}$.
	And 
	$t^{*}_{g}(\widetilde{\mathcal{F}^{pert}}) = \widetilde{\mathcal{F}^{pert}}$ for every $g \in K$.
\end{lm}

\begin{proof}
	Let $t \in I$.
	Let $h,r$ be distinct elements of $K$ such that
	$t_h(\Sigma) \cap t_r(\Sigma) \ne \emptyset$.
	Let $e = t_h(\Sigma) \cap t_r(\Sigma)$.
	Since 
	$s_{h^{-1}r}(s^{-1}_{r}(t)) = 
	s^{-1}_{h}(t)$,
	the two horizontal sheets
	$t_r(\Sigma) \times \{s^{-1}_{r}(t)\}$,
	$t_h(\Sigma) \times \{s^{-1}_{h}(t)\}$ are 
	glued along the two segments
	$$e \times \{s^{-1}_{r}(t)\} \subseteq
	t_r(\Sigma) \times \{s^{-1}_{r}(t)\},$$
	$$e \times \{s^{-1}_{h}(t)\} \subseteq
	t_h(\Sigma) \times \{s^{-1}_{h}(t)\}$$ as in
	Construction \ref{F_1}.
	It's clear that all horizontal sheets in
	$\{t_g(\Sigma) \times \{s^{-1}_{g}(t)\} \mid g \in K\}$ are glued to a leaf
	(we denote it by $\widetilde{l} \times \{t\}$).
	$\{\widetilde{l} \times \{t\} \mid t \in I\}$ forms the foliation $\widetilde{\mathcal{F}^{pert}}$ of $\widetilde{l} \times I$.
	
	For all $h \in K$, $t \in I$,
	we have
	\begin{align}
	t^{*}_{h}(\widetilde{l} \times \{t\}) &
	= t^{*}_{h}(\bigcup_{g \in K}
	(t_g(\Sigma) \times \{s^{-1}_{g}(t)\})) \nonumber \\ &
	= \bigcup_{g \in K}
	(t_{hg}(\Sigma) \times \{s^{-1}_{g}(t)\}) \nonumber \\ &
	= \bigcup_{g \in K}
	(t_{g}(\Sigma) \times \{s_{g^{-1}h}(t)\}) \nonumber \\ &
	= \bigcup_{g \in K}
	(t_{g}(\Sigma) \times \{s^{-1}_{g}(s_h(t))\}) \nonumber \\ &
	= \widetilde{l} \times \{s_h(t)\}. \label{equation}
	\end{align}
	Thus $t^{*}_{h}(\widetilde{\mathcal{F}^{pert}}) = \widetilde{\mathcal{F}^{pert}}$.
\end{proof}

$\widetilde{\mathcal{F}^{pert}}$ descends to a foliation
$\mathcal{F}^{pert}$ of $l \times I$.
We can replace $\mathcal{F}^{blow}_{0} \mid_{l \times I}$ by $\mathcal{F}^{pert}$ to
obtain a foliation of $M$.
We denote this foliation by $\mathcal{F}_1$, 
and we
denote its pull-back foliation in $\widetilde{M}$ by $\widetilde{\mathcal{F}_1}$.
For every $g \in G$,
the pocket $t_g(\widetilde{l} \times I)$ in $\widetilde{\mathcal{F}^{blow}_{0}}$ is replaced by
$t_g(\widetilde{\mathcal{F}^{pert}})$ in $\widetilde{\mathcal{F}_1}$.
So $\mathcal{F}_1$ is still an $\mathbb{R}$-covered foliation.

\begin{lm}\label{2-plane}
	There is a $2$-plane leaf of $\mathcal{F}_1$.
\end{lm}
\begin{proof}
	Let $h \in K - \{1\}$.
	By Equation \ref{equation} (cf. the proof of Lemma \ref{leaf}),
	we have $t^{*}_{h}(\widetilde{l} \times \{t\}) = \widetilde{l} \times \{s_h(t)\}$ 
	for every $t \in I$.
	So
	$t^{*}_{h}$ takes the leaf $\widetilde{l} \times \{\frac{1}{2}\}$ to
	the leaf $\widetilde{l} \times \{s_h(\frac{1}{2})\}$.
	Since $s_h(\frac{1}{2}) \ne \frac{1}{2}$,
	we have
	$t^{*}_{h}(\widetilde{l} \times \{\frac{1}{2}\}) \ne \widetilde{l} \times \{\frac{1}{2}\}$.
	By Novikov's Theorem (\hyperref[N]{[N]}, \hyperref[Ro]{[Ro]}),
	$\widetilde{l} \times \{\frac{1}{2}\}$ descends to a $2$-plane leaf of $\mathcal{F}_1$.
\end{proof}

\begin{remark}\rm\label{still R-covered}
	Recall from Definition \ref{collapsing operation} that a collapsing operation for a taut foliation is to
	replace a product region $S \times I$ 
	(where $S \times \{0\}$, $S \times \{1\}$ are leaves) 
	by a single leaf $S$.
	$\mathcal{F}_0$ can be obtained from doing a collapsing operation for $\mathcal{F}_1$,
	which replaces $\mathcal{F}_1 \mid_{l \times I}$ by a single leaf $l$.
\end{remark}

\subsection{The proof of Theorem \ref{process}}\label{subsection 5.2}

We complete the proof of Theorem \ref{process} in this subsection.
As shown in Lemma \ref{2-plane},
there exists a $2$-plane leaf of $\mathcal{F}_1$ contained in the region $l \times I$.
We denote this $2$-plane leaf by $\lambda$.

\begin{fact}\rm\label{finite}
	Let $C$ be an embedded compact disk in $\lambda$.
	Let $\tau$ be a transversal (with endpoints) of $\mathcal{F}_1$.
	Then $|\tau \cap C|$ is finite.
\end{fact}
\begin{proof}
	Assume that $|\tau \cap C|$ is infinite.
	Then there is a limit point $q$ of $\tau \cap C$ in $\tau$.
	Since $\tau, C$ are closed subsets of $M$,
	$\tau \cap C$ is closed.
	So $q \in \tau \cap C$.
	There exists an open neighborhood $U_q$ of $q$ in $C$ and
	an open neighborhood $V_q$ of $q$ in $\tau$ such that
	$U_q \cap V_q = \{q\}$.
	Notice that $C - U_q$ is closed.
	So there exists an open neighborhood $N_q$ of $q$ in $M$ such that
	$N_q \cap (C - U_q) = \emptyset$.
	It's clear that $q$ is the only intersection point of $C$ and the segment $V_q \cap N_q \subseteq \tau$.
	So $q$ is an isolated point of $\tau \cap C$ in $\tau$,
	which is a contradiction.
	Therefore,
	$|\tau \cap C|$ is finite.
\end{proof}

We blow-up the leaf $\lambda$ of $\mathcal{F}_1$
to obtain a foliation $\mathcal{F}_2$ of $M$.
Then $\mathcal{F}_2$ is still an $\mathbb{R}$-covered foliation,
and we still can do a collapsing operation on $\mathcal{F}_2$ to obtain $\mathcal{F}_0$.
Now the leaf $\lambda$ of $\mathcal{F}_1$
is replaced by a pocket $\lambda \times I$ in $\mathcal{F}_2$.
We assume that a transversal of $\mathcal{F}_2$ from
$\lambda \times \{0\}$ to $\lambda \times \{1\}$ is positively oriented.
Let $\mathcal{L}$ be the essential lamination $\mathcal{F}_2 - \lambda \times (0,1)$.

\begin{prop}\label{3-ball}
	There is a branched surface $B$ of $M$ such that
	(1)
	$B$ fully carries $\mathcal{L}$,
	(2)
	$M \setminus \setminus B$ is a compact $3$-ball.
\end{prop}
\begin{proof}
	By Corollary \ref{regulating},
	there is a regulating vector field $X$ in $M$ transverse to
	$\mathcal{F}_2$.
	Let $\widetilde{X}$ be the pull-back vector field of $X$ in 
	$\widetilde{M}$.
	Let $\mathcal{O} = \{\text{orbits of } X\}$ and
	$\widetilde{\mathcal{O}} = \{\text{orbits of } \widetilde{X}\}$.
	We can assume
	$\mathcal{O} \mid_{\lambda \times I} =
	\{\{q\} \times I \mid q \in \lambda\}$.
	
	Let $\{U_\alpha \mid \alpha \in \Psi\}$
	(where $\Psi$ is an index set) be a finite union of product charts of
	$\mathcal{F}_1$ such that 
	$\bigcup_{\alpha \in \Psi} U_\alpha = M$.
	For each $\alpha \in \Psi$,
	we choose a component $\widetilde{U_\alpha}$ of
	$p^{-1}(U_\alpha)$,
	and we denote
	$\{\gamma \in \widetilde{\mathcal{O}} \mid 
	\gamma \cap \widetilde{U_\alpha} \ne \emptyset\}$ by $\widetilde{\mathcal{O}_\alpha}$.
	For any $\alpha \in \Psi$ and
	$\gamma \in \widetilde{\mathcal{O}_\alpha}$,
	we denote by $\gamma^{+}_{\alpha}$ (resp. $\gamma^{-}_{\alpha}$) 
	the component of
	$\gamma - \widetilde{U_\alpha}$ which contains 
	the positive infinity (resp. negative infinity) of $\gamma$.
	Notice that $X$ is regulating.
	For each $\alpha \in \Psi$,
	there is an open connected set $\widetilde{X_\alpha}$ in some component of $p^{-1}(\lambda \times \{0\})$
	such that:
	for every $\gamma \in \widetilde{\mathcal{O}_\alpha}$,
	$\gamma^{+}_{\alpha}$ intersects $\widetilde{X_\alpha}$.
	And there is an open connected set $\widetilde{Y_\alpha}$ (for each $\alpha \in \Psi$)
	in some component of $p^{-1}(\lambda \times \{1\})$
	such that:
	for every $\gamma \in \widetilde{\mathcal{O}_\alpha}$,
	$\gamma^{-}_{\alpha}$ intersects $\widetilde{Y_\alpha}$.
	
	Since $\lambda \times \{0\}, \lambda \times \{1\}$ are $2$-plane leaves of $\mathcal{F}_2$,
	there exists an embedded compact disk $C \subseteq \lambda$ such that:
	for each $\alpha \in \Psi$,
	$p(\widetilde{X_\alpha}) \subseteq Int(C) \times \{0\}$,
	$p(\widetilde{Y_\alpha}) \subseteq Int(C) \times \{1\}$
	(where $Int(C) \times \{0\}, Int(C) \times \{1\}$ 
	denote regions in $\lambda \times \{0\}$, $\lambda \times \{1\}$
	respectively).
	Then for arbitrary point $q \in M - Int(C \times I)$,
	the ray that starts at $q$ and goes along 
	the element of $\mathcal{O}$ that contains $q$
    (through either direction) must meet
	$Int(C) \times \{0\} \cup Int(C) \times \{1\}$ somewhere.
	
	Let $\mathcal{N} = M - Int(C \times I)$.
	Let $\mathcal{O}_1$ be the restriction of 
	$\mathcal{O}$ to $\mathcal{N}$.
	Then every element of $\mathcal{O}_1$ is
	a closed interval.
	Let $B = \mathcal{N} / \stackrel{\mathcal{O}_1}{\sim}$,
	which is a ``branched surface'' whose
	branch locus may not only have double-intersection singularities.
	For simplicity,
	we still call $B$ a branched surface,
	and we still adopt Notation \ref{branched surface notation} for $B$.
	Then $\mathcal{N}$ is a fibered neighborhood of $B$ with 
	$\{\text{interval fibers of } \mathcal{N}\} = \mathcal{O}_1$.
	In the following,
	we first explain that
	every point of $B$ is contained in finitely many branch sectors,
	and then explain that
	$B$ has finitely many branch sectors.
	
	Let $N(B) = \mathcal{N}$ with
	$\{\text{interval fibers of } N(B)\} = \mathcal{O}_1$,
	and let $\pi: N(B) \to B$ denote the collapsing map for $N(B)$.
	Then $\partial_h N(B) = (C \times \{0\}) \cup (C \times \{1\})$.
	Let $v \in B$.
	Let $J$ be the interval fiber of $N(B)$ such that $v = \pi(J)$.
	If $v$ is contained in infinitely many branch sectors of $B$,
	then $J$ must intersect $\partial_h N(B)$ infinitely many times
	(where the intersections of $Int(J)$ and $\partial_h N(B)$ are contained in 
	the boundary of $\partial_h N(B)$).
	By Fact \ref{finite},
	$J$ intersects $(C \times \{0\}) \cup (C \times \{1\})$ finitely many times.
	Thus,
	$v$ is contained in finitely many branch sectors of $B$.
	Now assume that $B$ has infinitely many branch sectors.
	We choose an interior point in each branch sector of $B$,
	and we denote their union by $P$.
	Then $\pi^{-1}(P) \cap (C \times \{1\})$ is an infinite set,
	and so there must be a limit point $q$ of the set $\pi^{-1}(P) \cap (C \times \{1\})$ in $C \times \{1\}$.
	However,
	$\pi(q)$ is contained in finitely many branch sectors of $B$.
	This is a contradiction.
	Thus $B$ has finitely many branch sectors.
	At last,
	we can isotope the branch locus of $B$ so that
	it only has double-intersection singularities.
	Then $B$ is a branched surface as in Definition \ref{branched surface definition}.
	
	Since $\mathcal{L} \subseteq N(B)$ and
	$\mathcal{L}$ intersects $\mathcal{O}_1$ transversely,
	$\mathcal{L}$ is fully carried by $B$.
	And 
	$M \setminus \setminus B \cong M - Int(N(B)) = C \times I$
	is a compact $3$-ball.
	Hence Proposition \ref{3-ball} holds.
\end{proof}

Henceforth,
we will always denote $\mathcal{N}$ by $N(B)$ and assume
$\{\text{interval fibers of } N(B)\} = \mathcal{O}_1$.
We note that from the proof of Proposition \ref{3-ball},
$N(B)$ has the following properties:
$\partial_h N(B)$ is the union of two disks contained in $\lambda \times \{0\}, \lambda \times \{1\}$ respectively,
and
$\partial_v N(B)$ is a properly embedded annulus in $\lambda \times I$.
Moreover,
$\mathcal{F}_2 \mid_{N(B)}$ is a foliation of $N(B)$ such that:
the interior of $\partial_h N(B)$ is tangent to $\mathcal{F}_2 \mid_{N(B)}$,
and
the interior of $\partial_v N(B)$ is transverse to $\mathcal{F}_2 \mid_{N(B)}$.

Let $\widetilde{\mathcal{F}_2}$ be the pull-back foliation of $\mathcal{F}_2$ in
$\widetilde{M}$.
Let $\widetilde{\lambda} \times I$ be a component of $p^{-1}(\lambda \times I)$,
where $p(\widetilde{\lambda} \times \{t\}) = \lambda \times \{t\}$ for all $t \in I$.
Let $\widetilde{\lambda_0} = \widetilde{\lambda} \times \{0\}$ and
$\widetilde{\lambda_1} = \widetilde{\lambda} \times \{1\}$.
Since $\lambda \times \{0\}, \lambda \times \{1\}$ are $2$-plane leaves,
we have
$Stab_G(\widetilde{\lambda_0}) = 
Stab_G(\widetilde{\lambda_1}) = 1$.

\begin{defn}\rm
	We denote by $L_2$ the leaf space of $\widetilde{\mathcal{F}_2}$.
	Let $i_{des}: L_2 \to \mathbb{R}$ be an orientation-preserving homeomorphism.
	Let
	$K_0 = i_{des}(\widetilde{\lambda_0})$,
	$K_1 = i_{des}(\widetilde{\lambda_1})$.
	Let
	$\{r_g: \mathbb{R} \to \mathbb{R} \mid g \in G\}$ be the action of
	$G$ on $\mathbb{R}$ defined by
	$r_g = i_{des} \circ t_g \circ i^{-1}_{des}$ for any
	$g \in G$.
\end{defn}

\begin{remark}\rm\label{positively oriented transversal}
	Since $i_{des}$ is orientation-preserving,
	$i_{des}$ projects positively oriented transversals of $\widetilde{\mathcal{F}_2}$ to
	positively oriented intervals in $\mathbb{R}$.
	So $K_1 > K_0$.
	Notice that $K_0$, $K_1$ have trivial stabilizers under the action
	$\{r_g: \mathbb{R} \to \mathbb{R} \mid g \in G\}$,
	and all blown-up regions
	$\{t_g(\widetilde{\lambda} \times I)\}_{g \in G}$
	are disjoint in $\widetilde{M}$.
	Hence the images of
	$[K_0,K_1]$ under the action $\{r_g \mid g \in G\}$ are pairwise disjoint.
\end{remark}

\begin{defn}\rm\label{<}
	Let ``$<$'' be the strict linear order of $G$ defined by:
	
	$\bullet$
	For any distinct elements $g,h \in G$,
	$g < h$ if
	$r_g(K_0) < r_h(K_0)$.
\end{defn}

\begin{fact}\rm\label{left-invariant}
	``$<$'' is a left-invariant order of $G$.
\end{fact}
\begin{proof}
	Suppose that $g,h,q \in G$ and $g < h$.
	Then
	$$r_{qg}(K_0)
	= r_{q}(r_{g}(K_0))
	< r_{q}(r_{h}(K_0))
	= r_{qh}(K_0).$$
	So ``$<$'' is left-invariant.
\end{proof}

Let $\widetilde{B} = p^{-1}(B)$,
and let
$Sec(\widetilde{B}) = \{\text{branch sectors of }\widetilde{B}\}$.
As in Remark \ref{pull-back branched surface},
we denote by
$\widetilde{N(B)} = p^{-1}(N(B))$ the fibered neighborhood of $\widetilde{B}$
and denote by
$\widetilde{\pi}: \widetilde{N(B)} \to \widetilde{B}$
the collapsing map for
$\widetilde{N(B)}$.
Then $\widetilde{\mathcal{F}_2} \mid_{\widetilde{N(B)}}$ is 
a foliation of $\widetilde{N(B)}$ such that:
the interior of $\partial_h \widetilde{N(B)}$ is tangent to $\widetilde{\mathcal{F}_2} \mid_{\widetilde{N(B)}}$,
and
the interior of $\partial_v \widetilde{N(B)}$ is transverse to $\widetilde{\mathcal{F}_2} \mid_{\widetilde{N(B)}}$.

Let $\Gamma$ be the fundamental domain of $M$ in $\widetilde{M}$
such that
$p(\partial \Gamma) = B$.
Without loss of generality,
we assume that the base point $\widetilde{x}$ is contained in $Int(\Gamma)$.

\begin{lm}\label{endpoints}
	Let $F \in Sec(\widetilde{B})$.
	Let $I_0$ be an interval fiber of $\widetilde{N(B)}$ 
	such that $\widetilde{\pi}(I_0) \in Int(F)$.
	Let $h,s \in G$ for which
	$F$ is a common face of $t_h(\Gamma)$, $t_s(\Gamma)$ and
	$h > s$.
	
	(a)
	The two endpoints of $I_0$ are contained in
	$t_h(\widetilde{\lambda_0}),
	t_s(\widetilde{\lambda_1})$ respectively
	(cf. Figure \ref{descend}).
	
	(b)
	Assume that $I_0$ has a direction which
	starts at the endpoint contained in $t_s(\widetilde{\lambda_1})$ and
	ends at the endpoint contained in
	$t_h(\widetilde{\lambda_0})$.
	Then $I_0$ is a positively oriented transversal of $\widetilde{\mathcal{F}_2}$.
\end{lm}
\begin{proof}
	Let $\mathcal{G} = M - Int(N(B))$ and
	$\widetilde{\mathcal{G}} = p^{-1}(\mathcal{G}) \cap \Gamma$.
	Then the two endpoints of $I_0$ are contained in 
	$t_s(\widetilde{\mathcal{G}})$,
	$t_h(\widetilde{\mathcal{G}})$ respectively.
	Thus, 
	one endpoint of $I_0$ is contained in
	$t_s(\widetilde{\lambda_0}) \cup t_s(\widetilde{\lambda_1})$,
	and the other endpoint of $I_0$ is contained in
	$t_h(\widetilde{\lambda_0}) \cup t_h(\widetilde{\lambda_1})$.
	
	Since distinct elements of 
	$\{r_g\}_{g \in G}$
	take $[K_0,K_1]$ to
	disjoint intervals in $\mathbb{R}$,
	we have
	$r_h(K_1) > r_h(K_0) > r_s(K_1) > r_s(K_0)$,
	and thus
	$$i_{des}(t_h(\widetilde{\lambda_1})) >
	i_{des}(t_h(\widetilde{\lambda_0})) >
	i_{des}(t_s(\widetilde{\lambda_1})) >
	i_{des}(t_s(\widetilde{\lambda_0})).$$
	It follows that the two endpoints of $I_0$ are contained in 
	$t_h(\widetilde{\lambda_0}),
	t_s(\widetilde{\lambda_1})$ respectively
	(cf. Figure \ref{descend}).
	Therefore, 
	(a) holds.
	
	Since $i_{des}(t_h(\widetilde{\lambda_0})) >
	i_{des}(t_s(\widetilde{\lambda_1}))$,
	every transversal of $\widetilde{\mathcal{F}_2}$ from
	$t_s(\widetilde{\lambda_1})$ to
	$t_h(\widetilde{\lambda_0})$ is positively oriented. So (b) holds.
\end{proof}

Recall that Subsection \ref{subsection 3.2} provides 
a process to construct a branched surface given by
the order-domain pair $(<,\Gamma)$.
We denote this branched surface by $B^{*}$.
We show $B^{*} = B$ as follows.

\begin{figure}\label{descend}
	\includegraphics[width=0.7\textwidth]{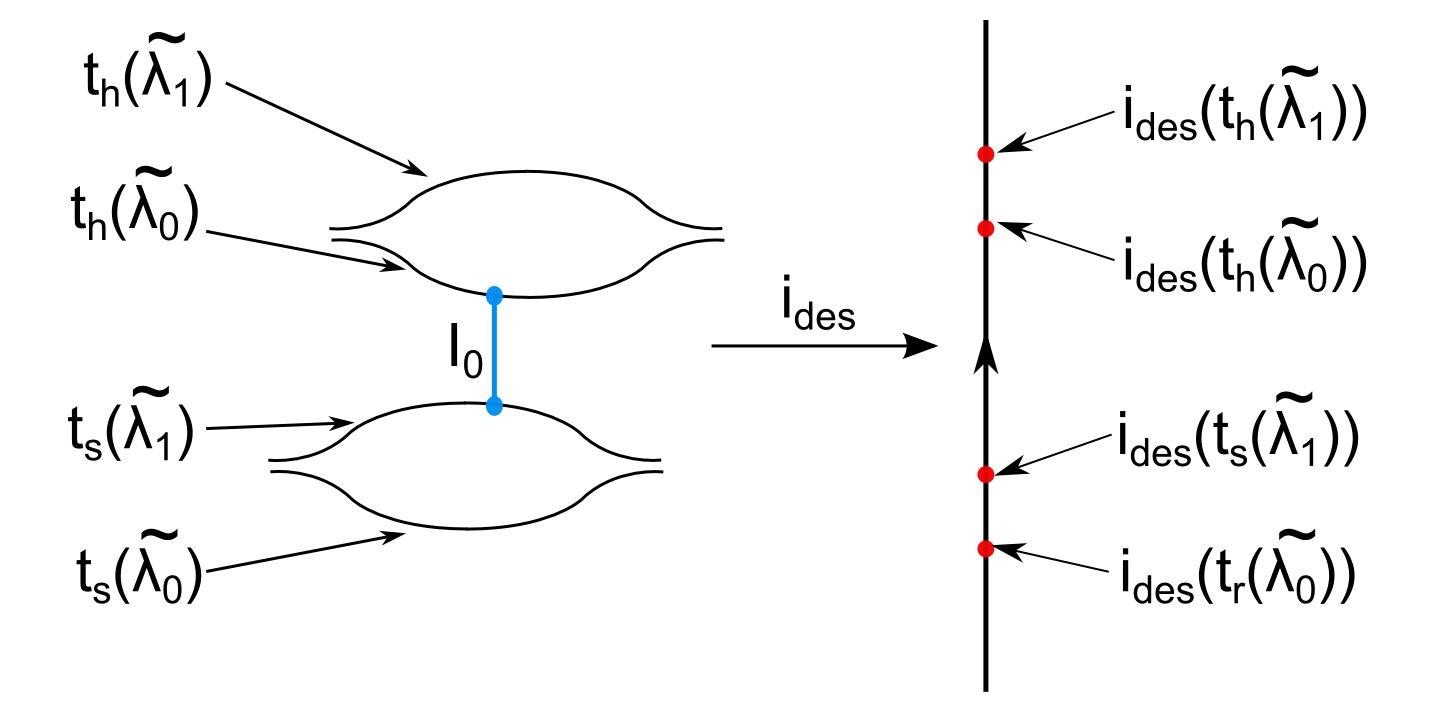}
	\caption{The two endpoints of $I_0$ are at
		$t_h(\widetilde{\lambda_0}),
		t_s(\widetilde{\lambda_1})$ respectively.}
\end{figure}

\begin{prop}\label{resulting branched surface}
	$B^{*} = B$.
\end{prop}
\begin{proof}
	Let $\widetilde{B^{*}} = p^{-1}(B^{*})$.
	Since the branch sectors of $B, B^{*}$ are same,
	we have
	$$\{\text{branch sectors of }\widetilde{B^{*}}\} = Sec(\widetilde{B}).$$
	We compare the co-orientations for $B, B^{*}$ on their branch sectors as follows.
	
	We assume that every interval fiber of $\widetilde{N(B)}$ has
	an orientation so that it is a positively oriented transversal of $\widetilde{\mathcal{F}_2}$.
	$\widetilde{B}$ has
	a co-orientation on $Sec(\widetilde{B})$ such that
	every $F \in Sec(\widetilde{B})$ has an orientation that is consistent with
	the orientation of the interval fibers of $\widetilde{N(B)}$ on $F$,
	i.e. is consistent with the direction of positively oriented transversals of $\widetilde{\mathcal{F}_2}$.
	
	Let $F \in Sec(\widetilde{B})$.
	We denote by $h,s \in G$ for which
	$F$ is a common face of $t_h(\Gamma)$, $t_s(\Gamma)$ and
	$h > s$.
	By Definition \ref{orienting},
	$F$ has positive orientation with respect to $t_s(\Gamma)$ as a branch sector of
	$\widetilde{B^{*}}$.
	By Lemma \ref{endpoints} (b),
	$F$ also has positive orientation with respect to $t_s(\Gamma)$ as a branch sector of
	$\widetilde{B}$.
	Therefore,
	$B^{*} = B$.
\end{proof}

Notice that $\partial_v N(B)$ is connected.
Combining Proposition \ref{resulting branched surface} and 
Proposition \ref{extendability} (1),
resulting foliations of $(<,\Gamma)$ in $M - Int(B^{3})$ can extend to taut foliations in $M$.
Furthermore,

\begin{prop}\label{resulting}
	There is a resulting foliation $\mathcal{F}$ of $(<,\Gamma)$ in $N(B) \cong M - Int(B^{3})$ such that
	$\mathcal{F} = \mathcal{F}_{2} \mid_{N(B)}$.
\end{prop}
\begin{sketch of the proof}\rm
	Recall from Section \ref{section 4}, 
	we first choose an action
	$\{\rho_g: \mathbb{R} \to \mathbb{R} \mid g \in G\}$ and 
	an interval $[N_0,N_1]$ (cf. Definition \ref{rho}),
	and then use them to 
	construct a resulting foliation of $(<,\Gamma)$.
	We set 
	$\{\rho_g: \mathbb{R} \to \mathbb{R} \mid g \in G\}$ to be
	$\{r_g: \mathbb{R} \to \mathbb{R} \mid g \in G\}$
	and set $[N_0, N_1]$ to be $[K_0, K_1]$.
	Then we have a resulting foliation $\mathcal{F}$ of $(<,\Gamma)$ by
	the process of Section \ref{section 4}.
	And we can verify 
	$\mathcal{F} = \mathcal{F}_{2} \mid_{N(B)}$.
	See the following proof for details.
\end{sketch of the proof}
\begin{proof}
	To begin with,
	we review some ingredients in Section \ref{section 4}:
	
	\begin{ingredient}\label{ingredient 1}\rm
	In Definition \ref{rho},
	we choose an action
	$\{\rho_g: \mathbb{R} \to \mathbb{R} \mid g \in G\}$ 
	of $G$ on $\mathbb{R}$ and a closed interval 
	$[N_0,N_1] \subseteq \mathbb{R}$ such that
	(1)
	$[N_0,N_1]$ has pairwise disjoint images under
	the action $\{\rho_g\}_{g \in G}$,
	(2)
	$\rho_g(N_0) < \rho_h(N_0)$ for 
	any $g,h \in G$ with $g < h$.
	\end{ingredient}
	
	\begin{ingredient}\label{ingredient 2}\rm
	There is a map
	$i_0: \coprod_{F \in Sec(\widetilde{B})} (F \times I) \to \mathbb{R}$ (cf. Definition \ref{piece value})
	with the following properties:
	
	(1)
	Let $F \in Sec(\widetilde{B})$, $t \in I$.
	For arbitrary distinct $m,n \in F \times \{t\}$,
	we have
	$i_0(m) = i_0(n)$.
	
	(2)
	For all $F \in Sec(\widetilde{B})$, $t \in I$,
	we have
	$\rho_g(i_0(F \times \{t\})) = i_0(t_g(F) \times \{t\})$
	(cf. Definition \ref{piece value}, Fact \ref{transform}).
	
	(3)
	Let $F \in Sec(\widetilde{B})$.
	We denote by $u,v \in G$ for which
	$F$ is a common face of $t_u(\Gamma),t_v(\Gamma)$ and
	$v > u$.
	Then
	$i_0(F \times \{0\}) = \rho_u(N_1)$,
	$i_0(F \times \{1\}) = \rho_v(N_0)$
	(cf. Definition \ref{I_F}, Definition \ref{psi}).
    \end{ingredient}
	
	\begin{ingredient}\label{ingredient 3}\rm
	We decompose $\widetilde{N(B)}$ to
	$\coprod_{F \in Sec(\widetilde{B})} (F \times I)$
	and then assign a gluing to it
	so that the horizontal sheets 
	$\{F \times \{t\} \mid 
	F \in Sec(\widetilde{B}), t \in I\}$ are glued to
	a $\pi_1$-equivariant foliation in $\widetilde{N(B)}$.
	For arbitrary $a,b \in I$ and distinct $F,X \in Sec(\widetilde{B})$
	with $F \cap X \ne \emptyset$,
	$F \times \{a\}$, $X \times \{b\}$ are attached when 
	$i_0(F \times \{a\}) = i_0(X \times \{b\})$
	(cf. Fact \ref{invariant}).
    \end{ingredient}
	
	We set
	(1)
	$\rho_g = r_g$ for every $g \in G$,
	(2)
	$N_0 = K_0$,
	$N_1 = K_1$.
	Following the process in Section \ref{section 4},
	we can
	construct a $\pi_1$-equivariant foliation $\widetilde{\mathcal{F}}$ in $\widetilde{N(B)}$,
	which descends to a foliation
	$\mathcal{F}$ of $N(B)$.
	We verify 
	$\widetilde{\mathcal{F}} = 
	\widetilde{\mathcal{F}_2} \mid_{\widetilde{N(B)}}$ in the following.
	
	Similar to Subsection \ref{subsection 4.4}, 
	we can decompose
	$\widetilde{N(B)}$ to
	$\coprod_{F \in Sec(\widetilde{B})} (F \times I)$.
	Then $\widetilde{\mathcal{F}_2} \mid_{\widetilde{N(B)}}$ is
	decomposed to a collection of horizontal sheets
	$\{F \times \{t\} \mid F \in Sec(\widetilde{B}), t \in I\}$.
	And we assume that: 
	for every $F \in Sec(\widetilde{B})$,
	a transversal of $\widetilde{\mathcal{F}_2}$ from $F \times \{0\}$ to $F \times \{1\}$ is
	a positively oriented.
	Let $i_{proj}: \widetilde{M} \to \mathbb{R}$ be the combination
	$\widetilde{M}\stackrel{proj}{\to} L_2 \stackrel{i_{des}}{\to} \mathbb{R}$,
	where $\widetilde{M} \stackrel{proj}{\to} L_2$ denotes the projection map from
	$\widetilde{M}$ to the leaf space $L_2$ of $\widetilde{\mathcal{F}_2}$.
	
	Let $F \in Sec(\widetilde{B})$.
	We denote by $u,v \in G$ for which
	$F$ is a common face of $t_u(\Gamma),t_v(\Gamma)$ and
	$v > u$.
	Recall from Lemma \ref{endpoints},
	$$i_{proj}(F \times \{0\}) = i_{des}(t_u(\widetilde{\lambda_1})) = r_u(K_1),$$
	$$i_{proj}(F \times \{1\}) = i_{des}(t_v(\widetilde{\lambda_0})) = r_v(K_0).$$
	By Ingredient \ref{ingredient 2} (c),
	we have
	$i_{proj}(F \times \{0\}) = i_0(F \times \{0\})$,
	$i_{proj}(F \times \{1\}) = i_0(F \times \{1\})$.
	We can assume that the $t$-variable of
	$\{F \times \{t\} \mid t \in I\}$ is parameterized such that
	$$i_{proj}(F \times \{t\}) = i_0(F \times \{t\}).$$
	By Ingredient \ref{ingredient 2} (b),
	we have
	$t_g(F \times \{t\}) = t_g(F) \times \{t\}$ for every $g \in G$.
	
	At last,
	we glue $\coprod_{F \in Sec(\widetilde{B})} (F \times I)$ to 
	$\widetilde{N(B)}$ that make
	$\{F \times \{t\} \mid F \in Sec(\widetilde{B}), t \in I\}$ be glued to
	$\widetilde{\mathcal{F}_2} \mid_{\widetilde{N(B)}}$.
	To show $\widetilde{\mathcal{F}_2} \mid_{\widetilde{N(B)}} =
	\widetilde{\mathcal{F}}$,
	we only need to verify that:
	the gluing relation for this gluing
	is same as the gluing relation for
	$\widetilde{\mathcal{F}}$ as in Ingredient \ref{ingredient 3}.
	For any $F,X \in Sec(\widetilde{B})$ with
	$F \cap X \ne \emptyset$ and 
	arbitrary $a,b \in I$,
	the two horizontal sheets
	$F \times \{a\}, X \times \{b\}$ are contained in 
	the same leaf of $\widetilde{\mathcal{F}_2}$ 
	if and only if 
	$i_{proj}(F \times \{a\}) = i_{proj}(X \times \{b\})$,
	and therefore if and only if
	$i_0(F \times \{a\}) = i_0(X \times \{b\})$.
	This is same as the gluing relation for
	$\widetilde{\mathcal{F}}$ in Ingredient \ref{ingredient 3}.
	So 
	$\widetilde{\mathcal{F}} = 
	\widetilde{\mathcal{F}_2} \mid_{\widetilde{N(B)}}$,
	and therefore
	$\mathcal{F} = \mathcal{F}_2 \mid_{N(B)}$.    
\end{proof}

Now we already have operations
$\mathcal{F}_0 \rightsquigarrow 
\mathcal{F}_1 \rightsquigarrow 
\mathcal{F}_2 \rightsquigarrow \mathcal{F}$.
Since
$\mathcal{F} = \mathcal{F}_{2} \mid_{N(B)}$ is a resulting foliation of $(<,\Gamma)$ 
and $\mathcal{F}_0$ can be obtained from 
doing a collapsing operation for $\mathcal{F}_2$,
$\mathcal{F} \rightsquigarrow \mathcal{F}_2 \rightsquigarrow  \mathcal{F}_0$
is the process as required in Theorem \ref{process}.

\end{document}